%% file: ex_article.tex
\documentclass[review,hidelinks,onefignum,onetabnum]{siamart250211}

\input{ex_shared}

\ifpdf
\hypersetup{
  pdftitle={importance sampling and active subspace in quasi-Monte Carlo},
  pdfauthor={J. Yu, X. Wang}
}
\fi

\begin{document}
\nolinenumbers
\maketitle

\begin{abstract}
The quasi-Monte Carlo method is widely used in computational finance, whose efficiency strongly depends on the smoothness and effective dimension of the integrand. 
In this work, we investigate the combination of importance sampling and the active subspace method under the quasi-Monte Carlo framework and propose a three-step approach, referred to as the IS-AS-preintegration method, which sequentially applies importance sampling, active subspace, and preintegration.
The proposed method is applied to the option pricing and sensitivity analysis problems in finance, and its performance is evaluated through extensive numerical experiments. 
The results demonstrate that the proposed method is highly competitive compared with existing popular methods. 
In particular, for out-of-the-money and deep out-of-the-money options, the proposed approach overcomes the limitations of the preintegration via active subspace method and achieves superior variance reduction, while maintaining comparable performance for other moneyness cases.

\end{abstract}

\begin{keywords}
quasi-Monte Carlo method, option pricing, active subspace, importance sampling, preintegration
\end{keywords}

\begin{MSCcodes}
65C05, 65D30, 91G20, 91G60
\end{MSCcodes}

\section{Introduction} 
Many financial problems can be formulated as high-\-dim\-en\-sion\-al integrals over \(\mathbb{R}^d\), where the positive integer \(d\) is the problem dimension. 
The integral of interest is
\begin{equation}
  I(g) = \int_{\mathbb{R}^d} g(\bm{z})\, \mathrm{d}\bm{z}. 
  \label{eq:c1integration}
\end{equation}
Such integrals typically lack explicit solutions and require numerical approximation.
Traditional numerical methods may not be used due to the curse of dimensionality, which means the number of nodes grows exponentially with increasing \(d\).  

The Monte Carlo (MC) method provides a feasible solution.  
The MC method converges at a rate of \(O(n^{-1/2})\) in the probability sense \cite{glassermanMonteCarloMethods2003}, where \(n\) is the number of sampling points.
Although the convergence rate is independent of the dimension of the integral, the low convergence rate  motivates the development of the quasi-Monte Carlo (QMC) method, where a deterministic set of sample points is used rather than random sampling.
The QMC method achieves a convergence rate of \(O\left(n^{-1}(\log n)^d\right)\) \cite{niederreiterRandomNumberGeneration1992}, which is higher than that of the MC method in the asymptotic sense. 

The QMC method was not recommended for high-dimensional problems because its convergence rate depends on the dimension \(d\), until Paskov and Traub \cite{paskovFasterValuationFinancial1995} applied it to solve the high-dimensional problem of pricing mortgage-backed securities. 
The numerical results demonstrated that the QMC method significantly outperformed the MC method for this high-dimensional problem.
Subsequent studies have reported similar observations for a wide range of high-dimensional financial problems \cite{wangQuasiMonteCarloMethods2011,akessonPathGenerationQuasiMonte2000}, indicating that the superior performance of QMC is not a problem-specific anomaly. 

These findings immediately attracted considerable attention in the academic community and led to the development of various theoretical explanations, among which the theory of effective dimension is the most representative \cite{caflischValuationMortgagebackedSecurities1997,wangEffectiveDimensionQuasiMonte2003}. According to the theory of effective dimension, the performance of QMC depends on the effective dimension of the problem rather than its nominal dimension. As a result, reducing the effective dimension of the integrand is often a crucial step for achieving high efficiency in practical QMC applications.

There are two kinds of methods to reduce the effective dimension. Early efforts primarily focused on path generation methods, such as Brownian bridge (BB) method \cite{caflischValuationMortgagebackedSecurities1997} and principal component analysis (PCA) method \cite{acworthComparisonMonteCarlo1998a}. 
Many studies have investigated the impact of path generation methods on the effective dimension \cite{wangEffectiveDimensionQuasiMonte2003,liuEstimatingMeanDimensionality2006,wangEffectsDimensionReduction2006,wangHowPathGeneration2012}. In particular, Papageorgiou showed that the BB method can have a negative effect in digital option pricing problems \cite{papageorgiouBrownianBridgeDoes2002}. 
The reason is that BB does not consider the characteristics of the integrand, and its performance therefore varies with the specific form of the integrand.

Recently, scholars pay more attention to the dimension reduction methods considering the features of the integrand. 
Imai and Tan proposed linear transformation (LT) method \cite{imaiGeneralDimensionReduction2006}, which substitute the integrand by the one order Taylor expansion at some fixed points. 
Weng et al.\ \cite{wengEfficientComputationOption2017} used \(k\)-means algorithm and QR decomposition to find \(k\) dimension approximation of integrand. 
Xiao and Wang found the  gradient information from the integrand and proposed gradient PCA (GPCA) method \cite{xiaoConditionalQuasiMonteCarlo2018,xiaoEnhancingQuasiMonteCarlo2019}. 

Importance sampling is a classical variance reduction technique in MC methods. Reider first applied importance sampling to the pricing of deep out-of-the-money options in 1993 \cite{Reiderefficientmontecarlo1993}, and the method was subsequently applied more broadly to financial problems \cite{boyleMonteCarloMethods1997}. More recently, Zhang et al.\ combined importance sampling with the GPCA method to further reduce variance \cite{zhangEfficientImportanceSampling2021}.

The active subspace method \cite{constantineActiveSubspacesEmerging2015} is a dimension-reduction technique that has emerged in recent years. It has been successfully applied to option pricing \cite{liuPreintegrationActiveSubspace2023}, aircraft design \cite{huDiscoveringOnedimensionalActive2016}, hydrological modeling \cite{jeffersonActiveSubspacesSensitivity2015}, and other problems. 
 Liu and Owen applied the active subspace method to Asian option pricing and other financial problems \cite{liuPreintegrationActiveSubspace2023}, achieving superior performance over GPCA in certain cases.

In this paper, based on the observation that the active subspace method is difficult to apply directly to out-of-the-money and deep out-of-the-money option pricing problems, we combine importance sampling with the active subspace method and propose a three-step method. We focus on the practical application of the proposed three-step method to option pricing and sensitivity analysis problems, and compare it with several popular approaches.  Moreover, we also demonstrate the orthogonal invariance property in the active subspace method. Through rigorous theoretical analysis and extensive numerical experiments, the advantages and limitations of the proposed method are assessed comprehensively.

The rest of the paper is organized as follows. 
The necessary background knowledge is reviewed briefly in Section \ref{sec:bg}, including the variance reduction techniques involved in our three-step method. 
Section \ref{sec:ISAS}  presents our three-step method, as well as the proof of the  orthogonal invariance of active subspaces and importance sampling-active subspaces. 
In Section \ref{sec:appnexp}, we apply the proposed method to Asian option pricing and option sensitivities analysis. 
Related numerical experiments are performed in Section \ref{sec:numexp}. The conclusions follow in Section \ref{sec:conclusion}.

\section{Background} 
\label{sec:bg}

In this section, we present some background in preintegration, active subspace and importance sampling. First, we introduce some notations. 
For a positive integer \(d\), we let \(\{1:d\} =\{1,2,...,d\}\). For a subset \(u\subseteq \{1:d\}\), we let \(-u=\{1:d\}\setminus u\). We use \(\mathbb{N}_0=\{0, 1, 2, ...\}\). The density and cumulative distribution function (CDF) of standard Gaussian distribution \(\mathcal{N}(0,1)\) are denoted by \(\varphi\) and \(\Phi\), respectively. The inverse CDF, i.e. the quantile function, of \(\mathcal{N}(0,1)\) is denoted by \(\Phi^{-1}\). These signs are also used for multi-dimensional standard Gaussian distribution. For a matrix \(C\), \(C_{\left[u,v\right]}\) is used to denote the \((u,v)\)-element of \(C\), and \(C_{\left[v,:\right]}\) and \(C_{\left[:,u\right]}\) are used to denote the \(v\)-th row of \(C\) and the \(u\)-th column of \(C\), respectively. 

\subsection{Preintegration}

Let \(k_1(\bm{z}),k_2(\bm{z})\) be smooth functions, and consider an integrand of the form
\begin{equation}
  g(\bm{z})=k_1(\bm{z})\bm{1}\{k_2(\bm{z})>0\},\label{eq:c3intergrand1}
\end{equation}
where $\bm{1}\{k_2(\bm{z})>0\}$ denotes the indicator function of the set $\{k_2(\bm{z})>0\}$.  
This section considers the integration problem of the discontinuous function $g(\bm{z})$, i.e.
\begin{equation}
  \mathbb{E}\left(g(\bm{Z})\right)=\mathbb{E}\left(k_1(\bm{Z})\bm{1}\{k_2(\bm{Z})>0\}\right),\label{eq:c3expaction1}
\end{equation}
where $\bm{Z}\sim N(\bm{0},I_d)$.

Preintegration is a method that applies the law of total expectation. It proceeds by first integrating out a subset of the variables to obtain an analytical expression for the inner conditional expectation, and then estimating the outer expectation via the MC or QMC method.

\subsubsection{Basic Principle}

Typically, just one variable is preintegrated. Without loss of generality, we suppose that the first variable is preintegrated. Then \eqref{eq:c3expaction1} can be rewritten as
\begin{equation}
  \mathbb{E}\left(g(\bm{z})\right)=\mathbb{E}\left(\mathbb{E}\left(g(\bm{z})|\bm{z_{-1}}\right)\right).\label{eq:c3doubleexpaction}
\end{equation}
The inner conditional expectation is
\begin{equation}
  g^\mathrm{P}(\bm{z_{-1}}):=\mathbb{E}\left(g(\bm{z})|\bm{z_{-1}}\right),\label{eq:c3doubleexpactioninner}
\end{equation}
which depends on the last $d-1$ variables $z_{-1}$.  
For convenience, we still denote $g^\mathrm{P}(\bm{z}_{-1})$ by $g^\mathrm{P}(\bm{z})$. Then the original integral $\mathbb{E}\left(g(\bm{z})\right)$ is calculated by estimating the outer expectation $\mathbb{E}\left(g^\mathrm{P}(\bm{z})\right)$.

In the MC method, preintegration is also called the conditional Monte Carlo (CMC) method, and it is well known due to its effectiveness of reducing variance. Moreover, in the QMC method, preintegration can improve the smoothness of the integrand, which is of great significance in the QMC method.

\subsubsection{Variance Reduced by Preintegration}
Let the ANOVA decomposition of $g(\bm{z})$ be $g(\bm{z})=\sum_{u\subseteq \{1:d\}} g_u(\bm{z})$.  
The expression of $g^\mathrm{P}(\bm{z})$ can be written as
\begin{align}
  g^\mathrm{P}(\bm{z})
  &=\mathbb{E}\left(g(\bm{z})|\bm{z_{-1}}\right) \notag \\
  &=\sum_{u\subseteq \{1:d\}} \int_{\mathbb{R}} g_u(\bm{z})\varphi(z_1) \mathrm{d} z_1 \notag \\
  &=\sum_{u\subseteq \{2:d\}} g_u(\bm{z}),
\end{align}
since \(\int_{-\infty}^{\infty}g_{u}(\bm{z})\phi(z_j)\mathrm{d}z_j=0, \forall j\in\bm{u}\).
Thus the reduced variance obtained by integrating out the first variable is
\begin{align}
  \sigma^2(g)-\sigma^2(g^\mathrm{P})
  &=\sum_{u\in \{1:d\}}\sigma^2(g_u)-\sum_{u\in \{2:d\}}\sigma^2(g_u) \notag \\ 
  &=\sum_{u\cap \{1\}\neq \emptyset}\sigma^2(g_u) \notag \\
  &=\overline{\tau}_{\{1\}}^2,
\end{align}
where \(\overline{\tau}_{\{1\}}^2\) is the Sobol' upper index associated with the first variable of the function \(g(\bm{z})\) \cite{sobolSensitivityEstimatesNonlinear1993}.

Obviously, the more variance is contributed by the preintegrated variable, the more variance will be reduced through preintegration.
Therefore, the theoretically optimal choice is to preintegrate the most significant variable.

However, smoothness is also a vital factor affecting the efficiency of the QMC method, so both variance reduced and smoothness should be taken into consideration when selecting the preintegration variable in QMC method.

\subsubsection{Variable Separability and Smoothing Effect}

If the discontinuous part $\bm{1}\{k_2(\bm{z})>0\}$ in \eqref{eq:c3intergrand1} can be written as
\begin{equation}
  \bm{1}\{k_2(\bm{z})>0\}=\bm{1}\{z_1>\gamma\},\label{eq:c3variableseperate1}
\end{equation}
or
\begin{equation}
  \bm{1}\{k_2(\bm{z})>0\}=\bm{1}\{z_1<\gamma\},\label{eq:c3variableseperate2}
\end{equation}
where $\gamma=\gamma(\bm{z}_{-1})$ is a continuous function, the discontinuous part is said to have variable separability with respect to $z_1$.  
This property holds when $k_2(\bm{z})$ is monotone in $z_1$.

In option pricing problems, the integrand is often driven by Brownian motion so that it has the form
\begin{align}
  g(\bm{z})=f(R\bm{z}),\label{eq:c3preintformula1}
\end{align}
where \(R\) is the generation matrix of the Brownian motion. 
Specifically, the functions \(k_i(\bm{z})\) in \eqref{eq:c3intergrand1} have the form
\begin{equation}
  k_i(\bm{z})=l_i(R\bm{z}),\quad i=1,2,\label{eq:c3preintformula2}
\end{equation}
where \(l_1\) and \(l_2\) are smooth functions, so the integrand \(f(R\bm{z})\) can be expressed as the form
\begin{equation}
  f(R\bm{z})=l_1(R\bm{z})\bm{1}\{l_2(R\bm{z})>0\}.\label{eq:c3preintformula3}
\end{equation}
If all elements of $R_{[:,1]}$, the first column of $R$, are nonnegative, then $k_2(\bm{z})=l_2(R\bm{z})$ is monotone increasing in $z_1$, and \eqref{eq:c3variableseperate1} holds by the implicit function theorem. Similarly, \eqref{eq:c3variableseperate2} holds for the monotone decreasing case.

Wang \cite{wangHandlingDiscontinuitiesFinancial2016}, He, and Wang \cite{heDimensionReductionSmoothing2017} proposed the variable separation method, which smooths integrands with variable-separable discontinuities to enhance QMC performance.  
When $k_2(\bm{z})$ is monotone increasing in \(z_1\), and $g^\mathrm{P}(\bm{z})$ becomes a smooth function depending only on $\bm{z}_{-1}$.

Preintegration both reduces variance and improves smoothness, making it particularly compatible with the QMC method.

\subsection{Active Subspace Method}

Let $g(\bm{z})$ be almost everywhere differentiable and $\bm{Z}\sim N(0,I_d)$.  
We consider the integration problem
\begin{equation}
  \mathbb{E}\left(g(\bm{Z})\right)
  .\label{eq:c3expaction2}
\end{equation}
The active subspace (AS) method \cite{constantineActiveSubspacesEmerging2015} is a dimension reduction technique. It orders the sensitive directions of a function in variance descending order, thereby reducing the effective dimension of the function.

\subsubsection{Active Subspace}

Consider the matrix
\begin{equation}
  C=\mathbb{E}\left(\nabla g(\bm{Z})\nabla g(\bm{Z})^T \right),\label{eq:c3gradientC}
\end{equation}
where
\begin{equation}
  \nabla g=\left(\frac{\partial g}{\partial z_1},\frac{\partial g}{\partial z_2},\dots,\frac{\partial g}{\partial z_d}\right)^T.
\end{equation}
By definition, the matrix $C$ describes gradient information of the integrand \(g(\bm{Z})\), referred to as the gradient information matrix.  
Obviously, $C$ is positive semidefinite and admits the eigendecomposition
\begin{equation}
  C=Q\Lambda Q^T,\label{eq:c3eigenC}
\end{equation}
where $Q=\{\bm{q}_1,\bm{q}_2,\dots,\bm{q}_d\}$ is an orthogonal matrix of eigenvectors of \(C\) with nonnegative components, and the diagonal matrix $\Lambda = \mathrm{diag}(\lambda_1,\lambda_2,\cdots,\lambda_d)$ contains eigenvalues in descending order.  

Let $Q_{[:,1:r]}=\{\bm{q}_1,\bm{q}_2,\dots,\bm{q}_r\}$ denote the first $r$ columns of $Q$.  
The $r$-dimensional space spanned by the columns of $Q_{[:,1:r]}$ is called the $r$-dimensional active subspace of $g(\bm{z})$.  
In this paper, active subspaces are defined under the standard Gaussian density. 
Let 

\begin{equation}
  \left[
    \begin{matrix}
      y_1\\
      y_2\\
      \vdots \\
      y_r
    \end{matrix}
  \right]
  =\left(Q_{[:,1:r]}\right)^T
  \left[
    \begin{matrix}
      z_1\\
      z_2\\
      \vdots \\
      z_r\\
      \vdots\\
      z_d
    \end{matrix}
  \right], \label{eq:c3ASCoordinateTransform}
\end{equation}
where $y_1,y_2,\dots,y_r$ are called the active variables of $g(\bm{z})$.
When $r=d$, \eqref{eq:c3ASCoordinateTransform} is simply a variable transformation $\bm{y}=Q^T\bm{z}$, so that $g(\bm{z})=g(Q\bm{y}):=g^\mathrm{A}(\bm{y})$.

\subsubsection{Dimension Reduction Effect}

The original motivation for introducing the AS method was to approximate a $d$-dimensional function using only its first $r$ active variables \cite{constantineActiveSubspaceMethods2014}.  
Considering the $d$-dimensional active subspace, the active variables are ordered by descending variance, which is consistent with the requirement of reducing the effective dimension of the integrand in the QMC method.

Let $\overline{\tau}_{q}^2(g)$ denote the Sobol' upper index associated with the first variable of the transformed function  
$g^\mathrm{A}(\bm{z}) = g(\bm{q}_1 z_1 + \dots + \bm{q}_d z_d)$.  
According to the Poincaré inequality \cite{sobolDerivativeBasedGlobal2010}, the Sobol' upper index have a upper bound:
\begin{align}
\overline{\tau}_{q}^2(g)
&\leq \mathbb{E}\left(\left(\bm{q}_1^T\nabla g \right)^2\right) \notag \\
&= \bm{q}_1^TC\bm{q}_1 \notag \\
&= \lambda_1.
\end{align}
For any unit vector $\bm{q}$, $\bm{q}^TC\bm{q}$ is bounded above by the largest eigenvalue of $C$. Within the active subspace framework, the upper bound for the Sobol' upper index of the first active variable attains its maximum.
Moreover, following Jansen's formula and the derivation of Liu and Owen \cite{liuPreintegrationActiveSubspace2023}, the Sobol' upper index can also be expressed as
\begin{equation}
\overline{\tau}_{q}^2(g)=\bm{q}_1^T\mathbb{E}\left(\nabla g\left(\frac{\bm{q}_1\bm{q}_1^T\bm{z}}{\sqrt{2}}+(I_d-\bm{q}_1\bm{q}_1^T)\bm{z}\right)\left(\nabla g\left(\frac{\bm{q}_1\bm{q}_1^T\bm{z}}{\sqrt{2}}+(I_d-\bm{q}_1\bm{q}_1^T)\bm{z}\right)\right)^T\right) \bm{q}_1.\label{eq:c3LSFformula}
\end{equation}
This expression is formally similar to $\bm{q}^T C\bm{q}=\bm{q}^T\mathbb{E}\left(\nabla g(\nabla g)^T \right)\bm{q}$. Therefore, \(\overline{\tau}_{q}^2(g)\) and \(\bm{q}^T C \bm{q}\) will be maximized simultaneously.

\subsubsection{Active Subspace and Preintegration}

Preintegration aims to analytically integrate out the most important  variables.  
The AS method provides a convenient way for identifying these variables.  
Recent studies have combined preintegration with the AS method and reported exceptional performance on various financial problems.

Xiao and Wang proposed performing preintegration first and then applying GPCA \cite{xiaoConditionalQuasiMonteCarlo2018}, referred to as  
preintegration-GPCA (Preint\_GPCA) in this paper. The method summarized in Algorithm~\ref{al:preintGPCA}.  
In this algorithm, GPCA coincides with applying the AS method without truncation on the full \(\left(d-1\right)\)-dimensional active subspace.

\begin{algorithm}
  \caption{Preint\_GPCA}
  \label{al:preintGPCA}
  \small
  \begin{algorithmic}
    \STATE (1) Compute the analytic form $g^\mathrm{P}(\bm{z})$ of the preintegrated function using \eqref{eq:c3doubleexpactioninner};
    \STATE (2) Compute the gradient $\nabla g^\mathrm{P}(\bm{z})$ of the $(d-1)$-dimensional function $g^\mathrm{P}(\bm{z})$;
    \STATE (3) Estimate the gradient information matrix $C = \mathbb{E}\left(\nabla g^\mathrm{P}(\bm{z})\,(\nabla g^\mathrm{P}(\bm{z}))^T \right)$;
    \STATE (4) Compute the eigendecomposition $C = Q\Lambda Q^T$, and define \(g^{\mathrm{PA}}(\bm{z})=g^\mathrm{P}(Q\bm{z})\);
    \STATE (5) Evaluate $\mathbb{E}\!\left(g^{\mathrm{PA}}(\bm{Z})\right)$ by QMC.
  \end{algorithmic}
\end{algorithm}

Liu and Owen proposed a reverse strategy called AS-preintegration (AS\_Preint), which applies the AS method before preintegration \cite{liuPreintegrationActiveSubspace2023}. 
The method is presented in Algorithm~\ref{al:ASpreint}.  
Their method computes only the first active direction and constructs an orthogonal matrix using a Householder transform.  
However, their approach does not fully exploit the superior uniformity of the leading coordinates in QMC method. Therefore we compute the full $d$-dimensional active subspace to make use of the uniformity of the leading coordinates as much as possible.

\begin{algorithm}
  \caption{AS\_Preint}
  \label{al:ASpreint}
  \small
  \begin{algorithmic}
    \STATE (1) Compute $\nabla g(\bm{z})$;
    \STATE (2) Estimate $C$ by \eqref{eq:c3gradientC};
    \STATE (3) Compute the decomposition $C = Q\Lambda Q^T$ and define $g^\mathrm{A}(\bm{z}) = g(Q\bm{z})$;
    \STATE (4) Compute the preintegrated function $g^\mathrm{AP}(\bm{z}) = \mathbb{E}\!\left(g^\mathrm{A}(\bm{z}) \mid \bm{z}_{-1}\right)$;
    \STATE (5) Evaluate $\mathbb{E}\!\left(g^\mathrm{AP}(\bm{Z})\right)$ by QMC.
  \end{algorithmic}
\end{algorithm}

In practice, the gradient usually can not be calculated accurately, it is often approximated by finite differences  
\cite{xiaoConditionalQuasiMonteCarlo2018,liuPreintegrationActiveSubspace2023}. For example, $\nabla g(\bm{z})$ in Algorithm~\ref{al:ASpreint} is approximated by
\begin{equation}
  \frac{\partial g}{\partial z_i}
  \approx
  \frac{g(\bm{z}+\varepsilon\bm{e}_i)-g(\bm{z})}{\varepsilon},
  \qquad i=1,\dots,d.
  \label{eq:c3estimategradient}
\end{equation}
where \(\varepsilon > 0\) is a small perturbation and \(\{\bm{e}_1, \dots, \bm{e}_d\}\) denote the standard orthonormal basis of \(\mathbb{R}^d\). The matrix $C$ can then be estimated by the QMC method.

Both Preint\_GPCA and AS\_Preint algorithms have demonstrated substantial reductions in effective dimension and strong numerical performance in option pricing and sensitivity analysis.  
However, Preint\_GPCA may fail for deep out-of-the-money options, and AS\_Preint may fail for both out-of-the-money and deep out-of-the-money options due to the fundamental difficulty in estimating $C$. That is, for out-of-the-money options, $C$ is theoretically close to the zero matrix, and gradient estimators provide little useful information because the sample points rarely fall into the domain of rare events.

\subsection{Importance Sampling}

Let $g(\bm{z})$ be a square integrable function and $\bm{Z}\sim N(0,I_d)$.  
We consider the evaluation of the integral $\mathbb{E}(g(\bm{Z}))$.

Importance sampling (IS) is a powerful variance reduction technique that modifies the sampling distribution to place greater weight on important domain of the integrand.

\subsubsection{Basic Principle}

Let $h(\bm{z})$ be a density whose support contains that of $\varphi(\bm{z})$.  
Then
\begin{align}
  \mathbb{E}\left(g(\bm{Z})\right)
  &= \int g(\bm{z})\varphi(\bm{z})\,\mathrm{d}\bm{z} \notag\\
  &= \int g(\bm{z}) \frac{\varphi(\bm{z})}{h(\bm{z})} h(\bm{z})\,\mathrm{d}\bm{z} \notag\\
  &= \mathbb{E}_{h}\!\left(g(\bm{Z})\frac{\varphi(\bm{Z})}{h(\bm{Z})}\right),
  \label{eq:c3ISprinciple}
\end{align}
where \(\mathbb{E}_{h}\!\left(\cdot\right)\) represents the expectation with respect to the density \(h(\bm{z})\). The density \(h(\bm{z})\) is called the IS density. The Radon-Nikodym derivative $\omega(\bm{z})=\varphi(\bm{z})/h(\bm{z})$ is usually called  the likelihood ratio here.

The variance under IS density is 
\begin{equation}
  \int \left(g(\bm{z})\omega(\bm{z})-\alpha\right)^2 h(\bm{z})\,\mathrm{d}\bm{z},
  \label{eq:c3ISvarriance}
\end{equation}
where $\alpha=\mathbb{E}_{h}(g\omega)$.  
To minimize the variance, $h$ should ideally approximate the optimal choice.
If $g(\bm{z}) \ge 0$, the optimal density is
\begin{equation}
  h_{\mathrm{opt}}(\bm{z})
  = \frac{g(\bm{z})\varphi(\bm{z})}{\alpha}.
  \label{eq:c3ISdensityoptimal}
\end{equation}
Although \eqref{eq:c3ISdensityoptimal} yields zero variance, it depends on the unknown integral $\alpha$ and is therefore not implementable.  
Nevertheless, it motivates selecting $h$ so that
\[
  h(\bm{z}) \propto g(\bm{z})\varphi(\bm{z}).
\]

If the integral is evaluated by the QMC method, the density $h$ need be sufficiently simple, typically Gaussian. Otherwise, it is too complex to obtain the sample points through the inverse transformation. Under the framework, practical IS schemes include optimal-drift IS and Laplace IS \cite{glassermanAsymptoticallyOptimalImportance1999,suOptimalImportanceSampling2002,capriottiLeastsquaresImportanceSampling2008,boothMaximizingGeneralizedLinear1999,kukLaplaceImportanceSampling1999,kuoQuasiMonteCarloHighly2008}, whose sample points generally be generated from the standard normal density via affine transformations.

\subsubsection{Optimal Drift Importance Sampling}

If $h$ is constrained to the family $N(\boldsymbol{\mu},I_d)$, then only the drift $\boldsymbol{\mu}$ need to be selected. 
Such IS schemes are referred to as optimal drift IS.

In this case,
\begin{align}
  \mathbb{E}\left(g(\bm{Z})\right)
  &=\mathbb{E}_{\boldsymbol{\mu}}\!\left(
    g(\bm{Z})
    \exp\!\left(-\boldsymbol{\mu}^T\bm{Z}
    +\frac{1}{2}\boldsymbol{\mu}^T\boldsymbol{\mu}\right)\right)  \notag\\
  &=\mathbb{E}\!\left(
    g(\bm{Z}+\boldsymbol{\mu})
    \exp\!\left(-\boldsymbol{\mu}^T\bm{Z}
    -\frac{1}{2}\boldsymbol{\mu}^T\boldsymbol{\mu}\right)\right),
  \label{eq:c3ISODISpriciple}
\end{align}
where $\mathbb{E}_{\bm{\mu}}$ denotes expectation under the measure induced by $N(\bm{\mu},I_d)$, and the second equality follows from an affine change of variables.
Define
\begin{equation}
  F(\bm{z}) = \ln g(\bm{z}).
  \label{eq:c3ISFlogg}
\end{equation}
Several authors  
\cite{glassermanAsymptoticallyOptimalImportance1999,suOptimalImportanceSampling2002,capriottiLeastsquaresImportanceSampling2008}
derive the optimal drift $\boldsymbol{\mu}^*$ as the solution to
\begin{equation}
  \nabla F(\bm{z}) = \bm{z},
  \label{eq:c3ISODISmiu0}
\end{equation}
i.e.,
\begin{equation}
  \frac{\nabla g(\bm{z})}{g(\bm{z})} = \bm{z}.
  \label{eq:c3ISODISmiu}
\end{equation}
Define the transformed integrand as
\begin{equation}
  g^\mathrm{I}(\bm{z})
  = g(\bm{z}+\boldsymbol{\mu}^*)
    \exp\!\left(
      -{\boldsymbol{\mu}^*}^T\bm{z}
      -\frac{1}{2}{\boldsymbol{\mu}^{*}}^T\boldsymbol{\mu}^*
    \right).
  \label{eq:c3ISODISintegrand}
\end{equation}
Then
\begin{equation}
  \mathbb{E}(g(\bm{Z})) = \mathbb{E}(g^\mathrm{I}(\bm{Z})).
  \label{eq:c3ISODISnewproblem}
\end{equation}

\subsubsection{Laplace Importance Sampling}

More generally, consider selecting the IS density from the family of normal distributions $N(\bm{\mu},\Gamma)$, where $\Gamma$ is a positive definite matrix. In this case, one needs to determine the drift vector $\bm{\mu}$ and the covariance matrix $\Gamma$. Such IS schemes are referred to as Laplace IS.

In this case, equation~\eqref{eq:c3ISprinciple} can be written as
\begin{align}
\mathbb{E}\!\left(g(\bm{Z})\right)
&= \mathbb{E}_{\bm{\mu},\Gamma} \!\left(
g(\bm{Z})\sqrt{\det(\Gamma)}\,
\exp\!\left(\tfrac{1}{2}\bm{Z}^T\bm{Z}
-\tfrac{1}{2}(\bm{Z}-\bm{\mu})^T\Gamma^{-1}(\bm{Z}-\bm{\mu})\right)
\right) \notag\\
&=\mathbb{E}\!\left(
g(L\bm{Z}+\bm{\mu})\sqrt{\det(\Gamma)}\,
\exp\!\left(\tfrac{1}{2}\bm{Z}^T\bm{Z}
-\tfrac{1}{2}(L\bm{Z}+\bm{\mu})^T(L\bm{Z}+\bm{\mu})\right)
\right),
\label{eq:c3ISLapISpriciple}
\end{align}
where $\mathbb{E}_{\bm{\mu},\Gamma}$ denotes expectation under the measure induced by $N(\bm{\mu},\Gamma)$, matrix $L$ satisfies $LL^T=\Gamma$, and the last step follows by applying an affine change of variables.

Using the same notation as in~\eqref{eq:c3ISFlogg}, the selection of the density in Laplace IS also requires the optimal drift $\bm{\mu}^*$ to satisfy equation~\eqref{eq:c3ISODISmiu0}. Once $\bm{\mu}^*$ is obtained, the covariance matrix is chosen as \cite{boothMaximizingGeneralizedLinear1999,kukLaplaceImportanceSampling1999,kuoQuasiMonteCarloHighly2008} 
\begin{equation}
\Gamma^*=\left(I_d-\nabla^2 F\!\left(\bm{\mu}^*\right)\right)^{-1},
\label{eq:c3ISLapISGamma}
\end{equation}
where $\nabla^2 F(\bm{z})$ denotes the Hessian matrix of $F(\bm{z})$.
Define the integrand as 
\begin{equation}
g^\mathrm{I}(\bm{z})=
g(L^*\bm{z}+\bm{\mu}^*)\,
\sqrt{\det(\Gamma^*)}\,
\exp\!\left(
\tfrac{1}{2}\bm{z}^T\bm{z}
-\tfrac{1}{2}(L^*\bm{z}+\bm{\mu}^*)^T(L^*\bm{z}+\bm{\mu}^*)
\right),
\label{eq:c3ISLapISintegrand}
\end{equation}
where $L^* L^{*T}=\Gamma^*$. Then \eqref{eq:c3ISLapISpriciple} becomes
\begin{align}
\mathbb{E}\!\left(g(\bm{Z})\right)
&=\mathbb{E}\!\left(
g(L^*\bm{Z}+\bm{\mu}^*)\sqrt{\det(\Gamma^*)}\,
\exp\!\left(
\tfrac{1}{2}\bm{Z}^T\bm{Z}
-\tfrac{1}{2}(L^*\bm{Z}+\bm{\mu}^*)^T(L^*\bm{Z}+\bm{\mu}^*)
\right)\right) \notag\\
&= \mathbb{E}\!\left(g^\mathrm{I}(\bm{Z})\right).
\label{eq:c3ISLapISnewproblem}
\end{align}

\subsubsection{Importance Sampling and Preintegration-GPCA}

Zhang et al.\ integrated IS into the preintegration-GPCA method and proposed a method that performs IS after preintegration, and subsequently applies GPCA \cite{zhangEfficientImportanceSampling2021}. We refer to this method as the preintegration-IS-GPCA (Preint\_IS\_GPCA) method. Taking optimal drift IS as an example, the algorithmic procedure is summarized in Algorithm~\ref{al:preintISGPCA}. This method not only overcomes the limitation of the preintegration-GPCA approach in handling deep out-of-the-money options but also achieves convergence more efficiently in certain scenarios.

\begin{algorithm}
\caption{Preint\_IS\_GPCA}
\label{al:preintISGPCA}
\small
\begin{algorithmic}
\STATE (1) Compute the analytic expression of the preintegrated function $g^\mathrm{P}(\bm{z})$ according to~\eqref{eq:c3doubleexpactioninner};
\STATE (2) For the $(d-1)$-dimensional function $g^\mathrm{P}(\bm{z})$, compute the optimal drift $\mu^*$ of optimal drift IS based on~\eqref{eq:c3ISODISmiu};
\STATE (3) Using~\eqref{eq:c3ISODISintegrand}, write
\[
g^\mathrm{PI}(\bm{z})
=g^\mathrm{P}(\bm{z}+\bm{\mu}^*)\exp\!\left(
-\bm{\mu}^{*T}\bm{z}
-\tfrac{1}{2}\bm{\mu}^{*T}\bm{\mu}^*
\right),
\]
and compute the gradient $\nabla g^\mathrm{PI}(\bm{z})$;
\STATE (4) Compute the gradient information matrix
\[
C=\mathbb{E}\!\left(\nabla g^\mathrm{PI}\left(\nabla g^\mathrm{PI}\right)^T\right)
\]
according to~\eqref{eq:c3gradientC};
\STATE (5) Perform eigenvalue decomposition $C=Q\Lambda Q^T$ according to~\eqref{eq:c3eigenC}, and define \(g^\mathrm{PIA}(\bm{z}) = g^\mathrm{PI}(Q\bm{z})\);
\STATE (6) Evaluate the integral $\mathbb{E}\!\left(g^\mathrm{PIA}(\bm{Z})\right)$ by QMC.
\end{algorithmic}
\end{algorithm}

\section{Importance Sampling and Active Subspace}
\label{sec:ISAS}

In this section, we proposes a three-step method. The method seamlessly integrates IS with the AS-preintegration method, thereby overcoming the limitation of the AS-preintegration method in dealing with out-of-the-money or deep out-of-the-money options, while simultaneously preserving its efficiency. Our three-step method is named as the IS-AS-preintegration (IS\_AS\_Preint) method.

We refer to the active subspace of $g^\mathrm{I}(\bm{z})$, the new integrand obtained by IS, as the IS-active subspace. We further introduce the concept of orthogonal invariance of active subspaces and demonstrate that this orthogonal invariance holds for both active subspaces and IS-active subspaces. As a consequence, it suffices to consider the standard construction of Brownian motion when applying the three-step method to financial problems driven by Brownian motion.

\subsection{Details of the Three-Step Method}
\label{subsec:Detailsof3StepMethod}
We begin with IS. For the integrand $g(\bm{z})$ in the expectation $\mathbb{E}(g(\bm{Z}))$, assume $g(\bm{z})\ge 0$. When optimal drift IS is used, the optimal drift $\bm{\mu}^*$ is obtained by solving \eqref{eq:c3ISODISmiu}. When Laplace IS is applied, the drift $\bm{\mu}^*$ is computed first, followed by the evaluation of the covariance matrix $\Gamma^*$ via \eqref{eq:c3ISLapISGamma}. Consequently, the problem is transformed into computing the expectation of $g^\mathrm{I}(\bm{z})$ defined in \eqref{eq:c3ISODISintegrand} or \eqref{eq:c3ISLapISintegrand}, corresponding to \eqref{eq:c3ISODISnewproblem} or \eqref{eq:c3ISLapISnewproblem}, respectively.

Next, the active subspace is computed. We first evaluate the gradient $\nabla g^\mathrm{I}(\bm{z})$, and then estimate the gradient information matrix
\begin{equation}
  C=\mathbb{E}\!\left(\nabla g^\mathrm{I}(\bm{Z})\nabla g^\mathrm{I}(\bm{Z})^T\right).
  \label{eq:c3mygradientISASC}
\end{equation}
Following \eqref{eq:c3eigenC}, we perform the eigenvalue decomposition $C=Q\Lambda Q^T$, where elements of the first column of the orthogonal matrix $Q$ are required to be nonnegative. The reason for this requirement will be explained later. If optimal drift IS is used, then in the $d$-dimensional active subspace, the analytic expression of $g^\mathrm{I}(\bm{z})$ is given by
\begin{align}
  g^\mathrm{IA}(\bm{z})
  &=g^\mathrm{I}(Q\bm{z}) \notag \\
  &=g(Q\bm{z}+\bm{\mu}^*)\exp\!\left(-{\bm{\mu}^*}^TQ\bm{z}-\frac{1}{2}{\bm{\mu}^*}^T\bm{\mu}^*\right).
  \label{eq:c3myASfunction}
\end{align}
The $d$ variables of $g^\mathrm{IA}(\bm{z})$ are ordered from most to least influential.

Thirdly, we preintegrate with respect to the first active variable. If optimal drift IS is used, we obtain 
\begin{align}
  g^\mathrm{IAP}(\bm{z})
  &=\mathbb{E}\!\left(g^\mathrm{IA}(\bm{z})\mid \bm{z_{-1}}\right) \notag \\
  &=\int_{\mathbb{R}} g(Q\bm{z}+\bm{\mu}^*)\exp\!\left(-{\bm{\mu}^*}^TQ\bm{z}-\frac{1}{2}{\bm{\mu}^*}^T\bm{\mu}^*\right)\varphi(z_1)\,\mathrm{d}z_1 ,
  \label{eq:c3mypreintfunction}
\end{align}
and the analytic expression of the $(d\!-\!1)$-dimensional preintegrated function  \eqref{eq:c3mypreintfunction} need to be derived.

In most option pricing and risk management problems, the integrand depends on Brownian motion and takes the forms \eqref{eq:c3preintformula1}-\eqref{eq:c3preintformula3}. In our method, one must choose the generation matrix of the Brownian motion $R$ with nonnegative elements.
Recall the earlier requirement that elements of the first column of $Q$ be nonnegative. 
If optimal drift IS is used, the function $l_2(RQ\bm{z}+R\bm{\mu}^*)$ remains monotone with respect to $z_1$ because elements of the first column of \(RQ\) are nonnegative. 
If Laplace IS is used, we need to additionally require the elements of $L^*$ to be nonnegative. This ensures elements of the first column of $RL^*Q$ are nonnegative, thereby preserving the same monotonicity. 
The monotonicity guarantees the property of variable separability and thus permits the explicit closed-form expression of the preintegrated function $g^\mathrm{IAP}(\bm{z})$.

Finally, the original problem reduces to computing the expectation $\mathbb{E}\!\left(g^\mathrm{IAP}(\bm{Z})\right)$.

The three step procedure for the case of optimal drift IS is summarized in Algorithm~\ref{al:ISASpreint}.

\begin{algorithm}
  \caption{IS\_AS\_Preint}
  \label{al:ISASpreint}
  \small
  \begin{algorithmic}
    \STATE (1) For the $d$-dimensional function $g(\bm{z})$, compute the optimal drift $\mu^*$ via \eqref{eq:c3ISODISmiu};
    \STATE (2) Evaluate the gradient vector $\nabla g^\mathrm{I}(\bm{z})$;
    \STATE (3) Estimate the gradient information matrix $C$ using \eqref{eq:c3mygradientISASC};
    \STATE (4) Perform the eigenvalue decomposition $C=Q\Lambda Q^T$ via \eqref{eq:c3eigenC}, requiring elements of the first column of $Q$ to be nonnegative, and obtain $g^\mathrm{IA}(\bm{z})$ from \eqref{eq:c3myASfunction};
    \STATE (5) Compute the analytic expression of the preintegrated function $g^\mathrm{IAP}(\bm{z})$ via \eqref{eq:c3mypreintfunction};
    \STATE (6) Use the QMC method to evaluate the expectation $\mathbb{E}\!\left(g^\mathrm{IAP}(\bm{Z})\right)$.
  \end{algorithmic}
\end{algorithm}

This algorithm is based on implementing IS, the AS method and preintegration in the specific order “IS, the AS method, preintegration”. We adopt this order for two reasons. First, since the AS method relies on the gradient information matrix, IS preceding it can ensure the validity of the gradient in the scenarios of out-of-the-money and deep out-of-the-money options. Second, prior research \cite{liuPreintegrationActiveSubspace2023} shows that the AS method followed by preintegration yields better performance than preintegration followed by GPCA. Therefore, within our three-step method, we kept the AS method before preintegration to retain this advantage.

After applying IS, the new integrand differs from the original due to the likelihood ratio. Consequently, in addition to an affine transformation, the active subspace is also influenced by this likelihood ratio.

\subsection{Orthogonal Invariance of Active Subspaces}

In this paper, the orthogonal invariance property of active subspace refers the property that regardless of the choice of the orthonormal basis in $\mathbb{R}^d$, the active subspace of a certain function remains the same. We prove that the active subspace is orthogonally invariant under the standard normal density.

\begin{theorem}
  Let $g_1(\bm{x})$ be differentiable almost everywhere, and let $U$ be an orthogonal matrix. By using the coordinate transformation $\bm{y}=U^T\bm{x}$, and defining $g_2(\bm{y}) := g_1(U\bm{y})$, we have that $g_1(\bm{x})$ and $g_2(\bm{y})$ have the same active subspaces.
  \label{thm:c3ivarrianceofAS}
\end{theorem}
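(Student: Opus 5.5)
The argument is a direct computation: express the gradient information matrix of $g_2$ in terms of that of $g_1$, then use the rotational invariance of the standard Gaussian. Throughout, "the same active subspace" means the same subspace of the underlying space $\mathbb{R}^d$, not the same coordinate representation of its basis vectors. Concretely, the $r$-dimensional active subspace of $g_2$, expressed in $\bm{y}$-coordinates, is mapped by $U$ onto the $r$-dimensional active subspace of $g_1$ in $\bm{x}$-coordinates. Equivalently, the active variables of the two functions coincide.

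First, the chain rule gives $\nabla g_2(\bm{y}) = U^T \nabla g_1(U\bm{y})$ wherever $g_1$ is differentiable at $U\bm{y}$. Since $U$ is a bijective linear map, it sends null sets to null sets, so this holds for almost every $\bm{y}$. Hence
\begin{equation*}
C_2=\mathbb{E}\left(\nabla g_2(\bm{Y})\nabla g_2(\bm{Y})^T\right)=U^T\,\mathbb{E}\left(\nabla g_1(U\bm{Y})\nabla g_1(U\bm{Y})^T\right)U ,
\end{equation*}
with $\bm{Y}\sim N(0,I_d)$.

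The key step, and the only place the Gaussian weight matters, is that $U\bm{Y}\sim N(0,UU^T)=N(0,I_d)$. The inner expectation therefore equals $C_1=\mathbb{E}\left(\nabla g_1(\bm{X})\nabla g_1(\bm{X})^T\right)$, and so $C_2=U^TC_1U$. Finiteness of $C_1$ transfers to $C_2$ by the same identity.

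Next, take the eigendecomposition $C_1=Q_1\Lambda Q_1^T$. Then $C_2=(U^TQ_1)\Lambda(U^TQ_1)^T$, and since $U^TQ_1$ is orthogonal, this is an eigendecomposition of $C_2$ with the same eigenvalues in the same descending order. Setting $Q_2=U^TQ_1$, the first $r$ columns of $Q_2$ span $U^T\mathrm{span}(Q_{1[:,1:r]})$. This is exactly the image of $g_1$'s active subspace under the coordinate change $\bm{y}=U^T\bm{x}$, i.e.\ the same geometric subspace.

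As a consistency check on the active variables, for $\bm{y}=U^T\bm{x}$ we get $Q_2^T\bm{y}=Q_1^TUU^T\bm{x}=Q_1^T\bm{x}$. So $g_2^{\mathrm{A}}(\bm{z})=g_2(Q_2\bm{z})=g_1(UU^TQ_1\bm{z})=g_1^{\mathrm{A}}(\bm{z})$, and the transformed integrands are identical.

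The main obstacle is not computational but definitional: non-uniqueness of the eigendecomposition. When eigenvalues are repeated, or when there is a sign ambiguity (the paper requires the first column to be nonnegative), $Q_2$ is not forced to equal $U^TQ_1$. The plan is to phrase the conclusion in terms of eigenspaces. The span of eigenvectors for eigenvalues $\ge\lambda_r$ is uniquely determined whenever $\lambda_r>\lambda_{r+1}$, and $U^T$ maps the eigenspaces of $C_1$ bijectively onto those of $C_2$. In degenerate cases one then notes that the choice $Q_2=U^TQ_1$ is an admissible decomposition, so the active subspaces can be chosen to agree.
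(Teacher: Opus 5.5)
Your proposal is correct and follows essentially the same route as the paper: the chain rule and the rotational invariance of $N(\bm{0},I_d)$ give $C_2=U^TC_1U$, you take $Q=U^TP$, and you conclude that the $r$-dimensional active subspaces coincide as subspaces of the underlying space (the paper phrases this via the bases $(\bm{\beta}_1,\cdots,\bm{\beta}_d)=(\bm{\alpha}_1,\cdots,\bm{\alpha}_d)U$). Your handling of the non-uniqueness of the eigendecomposition, through eigenspaces when $\lambda_r>\lambda_{r+1}$ and an admissible choice $Q_2=U^TQ_1$ otherwise, is more careful than the paper, which simply asserts that $Q=U^TP$ is implied.
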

\begin{proof}
  Let $C_1$ and $C_2$ be the gradient information matrices of $g_1$ and $g_2$, respectively, and let $\bm{Z}\sim N(\bm{0},I_d)$. Since
  \begin{equation}
    \nabla g_2(\bm{y}) = U^T\nabla g_1(U\bm{y}),
  \end{equation}
  we have
  \begin{align}
    C_2 &= \mathbb{E}\!\left(\nabla g_2(\bm{Z})\nabla g_2(\bm{Z})^T\right) \notag \\
        &= U^T \mathbb{E}\!\left(\nabla g_1(U\bm{Z})\nabla g_1(U\bm{Z})^T\right) U \notag \\
        &= U^T \mathbb{E}\!\left(\nabla g_1(\bm{Z})\nabla g_1(\bm{Z})^T\right) U \notag \\
        &= U^T C_1 U .
  \end{align}
  Thus $C_1$ and $C_2$ have the same eigenvalues $\lambda_1\ge\cdots\ge\lambda_d$. Define the diagonal matrix  $\Lambda = \mathrm{diag}(\lambda_1,\lambda_2,\cdots,\lambda_d)$, then \(C_1\) and \(C_2\) have eigendecomposition
  \begin{equation}
    C_1 = P\Lambda P^T, \qquad C_2 = Q\Lambda Q^T ,
  \end{equation}
  implying $Q = U^T P$.

  Let $\{\bm{\alpha}_i\}$ and $\{\bm{\beta}_i\}$ denote the bases associated with the coordinate vectors $\bm{x}$ and $\bm{y}$, respectively. Then
  \begin{equation}
    (\bm{\beta}_1,\cdots,\bm{\beta}_d)
    =(\bm{\alpha}_1,\cdots,\bm{\alpha}_d) U.
  \end{equation}
  For any $1\le r\le d$, the $r$-dimensional active subspace of $g_1$ is spanned by
  \begin{equation}
    (\bm{\zeta}_1,\cdots,\bm{\zeta}_r)
    =(\bm{\alpha}_1,\cdots,\bm{\alpha}_d) P_{[:,1:r]},
  \end{equation}
  while that of $g_2$ is spanned by
  \begin{align}
    (\bm{\eta}_1,\cdots,\bm{\eta}_r)
    &= (\bm{\beta}_1,\cdots,\bm{\beta}_d) Q_{[:,1:r]} \notag \\
    &= (\bm{\alpha}_1,\cdots,\bm{\alpha}_d) U U^T P_{[:,1:r]} \notag \\
    &= (\bm{\alpha}_1,\cdots,\bm{\alpha}_d) P_{[:,1:r]} .
  \end{align}
  Hence, the two $r$-dimensional active subspaces coincide for every $r$.
\end{proof}

Theorem~\ref{thm:c3ivarrianceofAS} holds since standard normal distribution is invariant under orthogonal transformations. We conjecture that similar orthogonal invariance holds for active subspaces if \(\bm{Z}\) uniformly distributes on a high-dimensional sphere.

The theorem is similar to the notion of statistical equivalence \cite{zhangEfficientImportanceSampling2021}, but differs in perspective. Theorem~\ref{thm:c3ivarrianceofAS} is an algebraic result derived by active subspaces. Statistical equivalence is defined by orthogonal transformed samples and concludes that the derived functions have the same analytic expression, which can be viewed as an immediate corollary of Theorem~\ref{thm:c3ivarrianceofAS}.

\begin{corollary}
 As the notation in Theorem~\ref{thm:c3ivarrianceofAS}, the expressions of $g_1(\bm{x})$ and $g_2(\bm{y})$ in the $d$-dimensional active subspace coincide, i.e., $\hat{g}_2(\bm{z})=\hat{g}_1(\bm{z})$.
  \label{cor:c3ivarrianceofAS}
\end{corollary}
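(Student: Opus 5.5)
The plan is to make precise what ``the expression in the $d$-dimensional active subspace'' means and then reuse the computation from the proof of Theorem~\ref{thm:c3ivarrianceofAS}. For a function $g$ with gradient information matrix $C=Q\Lambda Q^T$, the $d$-dimensional active-subspace expression is $\hat g(\bm{z}) = g(Q\bm{z})$, as in the definition of $g^\mathrm{A}$. So I need to show that $\hat g_1(\bm{z}) = g_1(P\bm{z})$ and $\hat g_2(\bm{z}) = g_2(Q\bm{z})$ agree as functions of $\bm{z}$, where $C_1=P\Lambda P^T$ and $C_2=Q\Lambda Q^T$.

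The key step is the relation $Q=U^TP$ from the proof of Theorem~\ref{thm:c3ivarrianceofAS}. Substituting the definition $g_2(\bm{y})=g_1(U\bm{y})$ gives
\[
\hat g_2(\bm{z}) = g_2(Q\bm{z}) = g_1(UQ\bm{z}) = g_1(UU^TP\bm{z}) = g_1(P\bm{z}) = \hat g_1(\bm{z}),
\]
using $UU^T=I_d$. This is a one-line computation once $Q=U^TP$ is secured.

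The main obstacle is the non-uniqueness of the eigendecomposition: $P$ and $Q$ are only determined up to sign flips of columns, and up to rotations within eigenspaces of repeated eigenvalues. Hence ``$Q=U^TP$'' should be read as a choice rather than a forced identity. I would handle this as follows. Since $C_2=U^TC_1U$, if $P$ diagonalizes $C_1$ then $U^TP$ is an orthogonal matrix diagonalizing $C_2$ with the same ordered $\Lambda$. So one may take $Q:=U^TP$, and the corollary holds for this consistent choice.

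If the paper's sign normalization is imposed (nonnegative entries in the first column, or a fixed sign convention), I would remark that when all eigenvalues are simple each eigenvector is determined up to sign. The normalization then fixes $P$, and $Q$ must be chosen compatibly. In general, the statement should be understood as equality modulo this inherent ambiguity of the eigenbasis, which affects $\hat g_1$ and $\hat g_2$ in exactly the same way. The conclusion matches the statistical equivalence of Zhang et al.
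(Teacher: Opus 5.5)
Your argument is correct and is exactly the paper's proof: take $Q=U^TP$ from Theorem~\ref{thm:c3ivarrianceofAS} and compute $g_2(Q\bm{z})=g_1(UU^TP\bm{z})=g_1(P\bm{z})$. Your remark that $Q=U^TP$ is a \emph{choice} of eigenbasis is a legitimate refinement that the paper's ``implying $Q=U^TP$'' glosses over, since the eigenbasis is fixed only up to column signs and rotations within repeated eigenspaces, and this choice can conflict with a sign normalization imposed on $Q$ independently.
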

\begin{proof}
  The basis of the $d$-dimensional active subspace coincide, hence so do the expressions:
  \begin{align}
    g_2^\mathrm{A}(\bm{z})
    &= g_2(Q\bm{z}) \notag \\
    &= g_1(UQ\bm{z}) \notag \\
    &= g_1(UU^T P\bm{z}) \notag \\
    &= g_1^\mathrm{A}(\bm{z}).
  \end{align}
\end{proof}

We now apply orthogonal invariance to problems driven by Brownian motion and prove that the choice of the generation matrix of the Brownian motion does not affect the resulting integrand in our three-step method.

\begin{lemma}
  Assume \(\Sigma\) is a positive definite matrix. Let $R_1R_1^T=\Sigma$ and $R_2R_2^T=\Sigma$. Then there exists an orthogonal matrix $U$ such that $R_2=R_1U$.
  \label{lem:c3lemma1}
\end{lemma}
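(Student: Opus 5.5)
The plan is to write down the candidate orthogonal matrix explicitly and then verify that it is orthogonal. Positive definiteness of $\Sigma$ makes both factors invertible: from $\det(R_1)^2=\det(R_1R_1^T)=\det\Sigma>0$ we get $\det R_1\neq 0$, and in the same way $\det R_2\neq0$. Here $R_1,R_2$ are taken to be square $d\times d$ matrices, as generation matrices of Brownian motion are in this paper. Given invertibility, the relation $R_2=R_1U$ can only hold with
\begin{equation*}
  U := R_1^{-1}R_2 ,
\end{equation*}
so this is the matrix I will use, and $R_2=R_1U$ then holds by construction.

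Next I will check orthogonality directly:
\begin{equation*}
  UU^T = R_1^{-1}R_2R_2^TR_1^{-T} = R_1^{-1}\Sigma R_1^{-T} = R_1^{-1}R_1R_1^TR_1^{-T} = I_d .
\end{equation*}
For square matrices, a one-sided inverse is a two-sided inverse, so $U^TU=I_d$ follows as well and $U$ is orthogonal.

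There is no serious obstacle. The only point needing care is to record that the factors are square and nonsingular, because that is exactly what justifies writing $R_1^{-1}$.

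If one wanted to allow non-square factors $R_i\in\mathbb{R}^{d\times m}$, this argument would no longer apply. In that case I would instead compare the singular value decompositions of $R_1$ and $R_2$. Because $R_1R_1^T=R_2R_2^T$, the two matrices have the same left singular vectors and the same singular values, and the right singular vectors can then be matched to build $U$. For the statement as posed, however, the explicit formula $U=R_1^{-1}R_2$ is enough.
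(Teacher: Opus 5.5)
Your proposal is correct and matches the paper's proof: the paper also gets invertibility of $R_1$ from $\det\Sigma\neq 0$ and sets $U=R_1^{-1}R_2$. The paper leaves the orthogonality check $UU^T=R_1^{-1}\Sigma R_1^{-T}=I_d$ as ``verified directly,'' and you carry it out explicitly, while also making explicit the square-matrix assumption the paper uses implicitly.
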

\begin{proof}
  Since $R_1R_1^T=\Sigma$, we have $\det(R_1)\neq 0$, and thus $R_1$ is invertible. Let $U=R_1^{-1}R_2$, and the conclusion can be verified directly.
\end{proof}

Obviously, \(R_1\) and \(R_2\) can be two generation matrices of Brownian motion. The following conclusion can be obtained immediately from Theorem~\ref{thm:c3ivarrianceofAS} and Lemma~\ref{lem:c3lemma1}.

\begin{corollary}
  For problems driven by Brownian motion, suppose that $R_1$ and $R_2$ are two generation matrices of the Brownian motion, and functions of interest are $g_1(\bm{z})=f(R_1\bm{z})$ and $g_2(\bm{z})=f(R_2\bm{z})$. Then $g_1(\bm{z})$ and $g_2(\bm{z})$ have the same active subspaces. Moreover, analytic expressions of $g_1(\bm{z})$ and $g_2(\bm{z})$ coincide in the $d$-dimensional active subspace, i.e., $g_1^\mathrm{A}(\bm{z})=g_2^\mathrm{A}(\bm{z})$.
  \label{cor:c3ASindependentofBM}
\end{corollary}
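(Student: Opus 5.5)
The plan is to reduce Corollary~\ref{cor:c3ASindependentofBM} to Theorem~\ref{thm:c3ivarrianceofAS} and Corollary~\ref{cor:c3ivarrianceofAS} by exhibiting $g_2$ as an orthogonal reparametrization of $g_1$. Both $R_1$ and $R_2$ are generation matrices of the same Brownian motion. Hence $R_1R_1^T=R_2R_2^T=\Sigma$, where $\Sigma$ is the covariance matrix of the discretized Brownian path (entries $\min(t_i,t_j)$ for distinct positive time points). This $\Sigma$ is positive definite.

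The first step is to apply Lemma~\ref{lem:c3lemma1}. It gives an orthogonal matrix $U=R_1^{-1}R_2$ with $R_2=R_1U$, so that
\begin{equation*}
g_2(\bm{y})=f(R_2\bm{y})=f(R_1U\bm{y})=g_1(U\bm{y}).
\end{equation*}
This is exactly the relation $g_2(\bm{y})=g_1(U\bm{y})$ under the coordinate change $\bm{y}=U^T\bm{x}$ in Theorem~\ref{thm:c3ivarrianceofAS}.

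The second step is to check the theorem's hypothesis. We need $g_1$ differentiable almost everywhere; this is part of the standing assumption that the integrand $f(R\bm{z})$ admits the gradient information matrix. It is also preserved under the invertible linear map $\bm{z}\mapsto R_1^{-1}R_2\bm{z}$, because such a map sends null sets to null sets.

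With this in place, Theorem~\ref{thm:c3ivarrianceofAS} gives that $g_1$ and $g_2$ have the same $r$-dimensional active subspaces for every $r$. Corollary~\ref{cor:c3ivarrianceofAS} then gives $g_1^\mathrm{A}(\bm{z})=g_2^\mathrm{A}(\bm{z})$, via $Q=U^TP$ and $g_2(Q\bm{z})=g_1(UU^TP\bm{z})=g_1(P\bm{z})$.

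The only delicate point is that eigendecompositions are not unique. For the identity of analytic expressions we need the eigenvector matrices to be chosen compatibly, namely $Q=U^TP$. If the eigenvalues are distinct, each column is determined up to sign. I would fix the signs by the sign convention adopted in the paper (for instance nonnegative components, as in the statement of \eqref{eq:c3eigenC}), so that the equality $Q=U^TP$ is a convention consistent with Theorem~\ref{thm:c3ivarrianceofAS} rather than an extra claim. For repeated eigenvalues I would simply note, as the paper implicitly does, that $Q$ is taken to be $U^TP$, which is an admissible eigenvector matrix of $C_2=U^TC_1U$.
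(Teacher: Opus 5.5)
Your argument is correct and is the paper's route: the paper only says the corollary follows immediately from Theorem~\ref{thm:c3ivarrianceofAS} and Lemma~\ref{lem:c3lemma1}. Your proposal spells this out: $U=R_1^{-1}R_2$ is orthogonal with $R_2=R_1U$, hence $g_2(\bm y)=g_1(U\bm y)$, and Theorem~\ref{thm:c3ivarrianceofAS} together with Corollary~\ref{cor:c3ivarrianceofAS} finish the proof.

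One correction to your last paragraph: a sign convention imposed separately on $P$ and on $Q$ does not produce $Q=U^TP$.

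\begin{itemize}
\item Literal ``nonnegative components'' cannot be imposed at all, since an orthogonal matrix with nonnegative entries must be a permutation matrix.
\item A convention such as ``first entry of each column nonnegative'' is not equivariant under $U$. If the first entry of $U^T\bm{p}_k$ is negative, the convention selects $-U^T\bm{p}_k$ for the $k$-th column of $Q$. Then $Q=U^TPD$ for a diagonal sign matrix $D\neq I$, and $g_2^\mathrm{A}(\bm z)=g_1(UU^TPD\bm z)=g_1^\mathrm{A}(D\bm z)$. This is not $g_1^\mathrm{A}(\bm z)$ unless $g_1^\mathrm{A}$ happens to be symmetric in the flipped coordinates, which the Asian payoff is not.
\end{itemize}

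The clean fix is the one you already use for repeated eigenvalues. In every case, simply take $Q:=U^TP$. This is an admissible orthogonal eigenvector matrix of $C_2=U^TC_1U$, and it is exactly the implicit choice the paper makes when it writes ``implying $Q=U^TP$'' in the proof of Theorem~\ref{thm:c3ivarrianceofAS}.

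If the eigenvector matrices are chosen independently, the identity holds only up to such a sign flip, or an orthogonal change within eigenspaces. That still leaves the active subspaces (as spans) and the Gaussian expectation unchanged, but it is weaker than the stated equality $g_1^\mathrm{A}=g_2^\mathrm{A}$.
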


Corollary~\ref{cor:c3ASindependentofBM} shows that, it suffices to consider the standard construction of Brownian motion if the AS-preintegration method is used for problems driven by Brownian motion.

\subsection{Orthogonal Invariance of IS-Active Subspace}

When the IS density is chosen as the optimal drift IS or the Laplace IS, the resulting IS-active subspace is also orthogonally invariant. Throughout this subsection, the solution of the equation satisfied by the optimal drift in~\eqref{eq:c3ISODISmiu} is assumed to be unique.

First, the conclusion will be proved in case of the optimal drift IS.

\begin{theorem}
  Let $g_1(\bm{z})$ be a nonnegative function which is differentiable almost everywhere, and let $U$ be an orthogonal matrix. Define $g_2(\bm{z}) = g_1(U\bm{z})$. The integrands obtained by applying optimal drift IS to $g_1$ and $g_2$ are denoted by $g_1^\mathrm{I}(\bm{z})$ and $g_2^\mathrm{I}(\bm{z})$, respectively. Then $g_2^\mathrm{I}(\bm{z})$ can be written as an orthogonal transformation of $g_1^\mathrm{I}(\bm{z})$. Consequently, $g_1(\bm{z})$ and $g_2(\bm{z})$ have the same IS-active subspaces. Hence analytic expressions of $g_1(\bm{z})$ and $g_2(\bm{z})$ coincide in the \(d\)-dimensional IS-active subspace, i.e., $g_1^\mathrm{IA}(\bm{z})=g_2^\mathrm{IA}(\bm{z})$.
  \label{thm:c3ivarrianceofISAS1}
\end{theorem}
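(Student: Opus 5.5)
The plan is to first relate the two optimal drifts, then show that the IS integrands are related by the same orthogonal change of variables, and finally invoke Theorem~\ref{thm:c3ivarrianceofAS} and Corollary~\ref{cor:c3ivarrianceofAS}.

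\textbf{Step 1: the optimal drifts.} Let $\bm{\mu}_1^*$ be the unique solution of \eqref{eq:c3ISODISmiu} for $g_1$, that is, $\nabla g_1(\bm{\mu}_1^*)=g_1(\bm{\mu}_1^*)\bm{\mu}_1^*$. I claim $\bm{\mu}_2^*=U^T\bm{\mu}_1^*$. By the chain rule, $\nabla g_2(\bm{z})=U^T\nabla g_1(U\bm{z})$. At $\bm{z}=U^T\bm{\mu}_1^*$ this gives
\[
\nabla g_2(U^T\bm{\mu}_1^*)=U^T\nabla g_1(\bm{\mu}_1^*)=g_1(\bm{\mu}_1^*)\,U^T\bm{\mu}_1^*=g_2(U^T\bm{\mu}_1^*)\,U^T\bm{\mu}_1^*.
\]
So $U^T\bm{\mu}_1^*$ solves the drift equation for $g_2$. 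By the standing uniqueness assumption, it equals $\bm{\mu}_2^*$. This is the one step where care is needed: uniqueness is what allows us to identify the two drifts rather than merely exhibit a solution. Differentiability at the drift point is implicitly assumed, as in the definition of the drift.

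\textbf{Step 2: the IS integrands.} Substitute into \eqref{eq:c3ISODISintegrand}, using $U\bm{\mu}_2^*=\bm{\mu}_1^*$ and ${\bm{\mu}_2^*}^T\bm{\mu}_2^*={\bm{\mu}_1^*}^T\bm{\mu}_1^*$:
\[
g_2^\mathrm{I}(\bm{z})=g_1(U\bm{z}+\bm{\mu}_1^*)\exp\!\left(-{\bm{\mu}_1^*}^TU\bm{z}-\tfrac12{\bm{\mu}_1^*}^T\bm{\mu}_1^*\right)=g_1^\mathrm{I}(U\bm{z}).
\]
Hence $g_2^\mathrm{I}$ is the orthogonal transformation of $g_1^\mathrm{I}$ by $U$. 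Moreover, $g_1^\mathrm{I}$ is differentiable almost everywhere, since an affine shift times a smooth exponential preserves this property.

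\textbf{Step 3: conclusion.} Apply Theorem~\ref{thm:c3ivarrianceofAS} to the pair $g_1^\mathrm{I}$ and $g_2^\mathrm{I}=g_1^\mathrm{I}(U\,\cdot)$. This shows their active subspaces, which are by definition the IS-active subspaces of $g_1$ and $g_2$, coincide for every $r$. Corollary~\ref{cor:c3ivarrianceofAS} then gives $g_1^\mathrm{IA}=g_2^\mathrm{IA}$, with $Q_2=U^TQ_1$.

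One more point should be checked. The nonnegativity normalization of the first column of $Q$ is applied to both $Q_1$ and $Q_2$, so the eigenvector sign choice is consistent. If eigenvalues repeat, the eigenbasis is only determined up to rotation within eigenspaces; I would handle this as in Theorem~\ref{thm:c3ivarrianceofAS}, by choosing $Q_2=U^TQ_1$.
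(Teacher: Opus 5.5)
Your proof is correct and follows the paper's argument step for step. You identify the drifts via uniqueness of the solution of \eqref{eq:c3ISODISmiu}, substitute to get $g_2^\mathrm{I}(\bm{z})=g_1^\mathrm{I}(U\bm{z})$, and then invoke Theorem~\ref{thm:c3ivarrianceofAS} and Corollary~\ref{cor:c3ivarrianceofAS}; you check that $U^T\bm{\mu}_1^*$ solves the equation for $g_2$, whereas the paper checks that $U\bm{\mu}_2^*$ solves the one for $g_1$, which is equivalent.

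The only quibble is your closing remark. The first-column nonnegativity convention does not make the eigenvector choices consistent: it fixes at most the sign of the first column, and $U^T\bm{q}_1$ need not be entrywise nonnegative. So $g_1^\mathrm{IA}=g_2^\mathrm{IA}$ should be read for the choice $Q_2=U^TQ_1$, which is what the paper does implicitly with ``$Q=U^TP$'' and what you already choose in the repeated-eigenvalue case.
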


\begin{proof}
  Let $\bm{\mu}_1$ and $\bm{\mu}_2$ denote the optimal drifts obtained by the optimal drift IS for $g_1(\bm{z})$ and $g_2(\bm{z})$, respectively. Then
  \begin{equation}
    \frac{\nabla g_1(\bm{\mu}_1)}{g_1(\bm{\mu}_1)} = \bm{\mu}_1,
  \end{equation}
  \begin{equation}
    \frac{U^{\!T} \nabla g_1(\bm{\mu}_2)}{g_1(\bm{\mu}_2)} = \bm{\mu}_2.
  \end{equation}
  By the uniqueness of the solution to~\eqref{eq:c3ISODISmiu}, it follows that
  \begin{equation}
    \bm{\mu}_1 = U\bm{\mu}_2.
  \end{equation}
  According to~\eqref{eq:c3ISODISintegrand}, we have
  \begin{align}
    g_2^\mathrm{I}(\bm{z})
    &= g_2(\bm{z}+\bm{\mu}_2)\exp\!\left(-\bm{\mu}_2^{\!T}\bm{z}-\frac{1}{2}\bm{\mu}_2^{\!T}\bm{\mu}_2\right) \notag\\
    &= g_1(U\bm{z}+\bm{\mu}_1)\exp\!\left(-\bm{\mu}_1^{\!T}U\bm{z}-\frac{1}{2}\bm{\mu}_1^{\!T}\bm{\mu}_1\right) \notag\\
    &= g_1^\mathrm{I}(U\bm{z}),
  \end{align}
  i.e., $g_2^\mathrm{I}(\bm{z})$ is obtained from $g_1^\mathrm{I}(\bm{z})$ by an orthogonal transformation.
  Theorem~\ref{thm:c3ivarrianceofAS} then implies that $g_1^\mathrm{I}(\bm{z})$ and $g_2^\mathrm{I}(\bm{z})$ have the same active subspace, and thus $g_1(\bm{z})$ and $g_2(\bm{z})$ share the same IS-active subspaces.

  Finally, Corollary~\ref{cor:c3ivarrianceofAS} gives $g_1^\mathrm{IA}(\bm{z})=g_2^\mathrm{IA}(\bm{z})$.
\end{proof}

Next we prove the property for the Laplace IS.

\begin{theorem}
  Let $g_1(\bm{z})$ be a nonnegative function which is differentiable almost everywhere, and let $U$ be an orthogonal matrix. Define $g_2(\bm{z}) = g_1(U\bm{z})$. The integrands obtained by applying Laplace IS to $g_1$ and $g_2$ are denoted by $g_1^\mathrm{I}(\bm{z})$ and $g_2^\mathrm{I}(\bm{z})$, respectively. Then $g_2^\mathrm{I}(\bm{z})$ can be written as an orthogonal transformation of $g_1^\mathrm{I}(\bm{z})$. Consequently, $g_1(\bm{z})$ and $g_2(\bm{z})$ have the same IS-active subspace. Hence analytic expressions of $g_1(\bm{z})$ and $g_2(\bm{z})$ coincide in the $d$-dimensional IS-active subspace, i.e., $g_1^\mathrm{IA}(\bm{z})=g_2^\mathrm{IA}(\bm{z})$.
  \label{thm:c3ivarrianceofISAS2}
\end{theorem}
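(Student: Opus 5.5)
The plan is to follow the proof of Theorem~\ref{thm:c3ivarrianceofISAS1}. The new work is to track how the covariance matrix $\Gamma^*$ and its factor $L^*$ transform under $U$.

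First, the drifts. Exactly as in the optimal drift case, uniqueness of the solution of \eqref{eq:c3ISODISmiu} together with $\nabla g_2(\bm z)=U^T\nabla g_1(U\bm z)$ gives $\bm\mu_1=U\bm\mu_2$.

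Next, the covariance matrices. Put $F_i=\ln g_i$. Then $F_2(\bm z)=F_1(U\bm z)$, so $\nabla^2F_2(\bm z)=U^T\nabla^2F_1(U\bm z)U$. Evaluating at $\bm\mu_2$ gives
\[
\nabla^2F_2(\bm\mu_2)=U^T\nabla^2F_1(\bm\mu_1)U .
\]
Since $I_d=U^TU$, it follows that $\Gamma_2^*=(U^T(I_d-\nabla^2F_1(\bm\mu_1))U)^{-1}=U^T\Gamma_1^*U$.

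Now the factors, which is the delicate step. From $\Gamma_2^*=U^T\Gamma_1^*U$, the matrix $U^TL_1^*$ is one square root of $\Gamma_2^*$. By Lemma~\ref{lem:c3lemma1}, any chosen $L_2^*$ with $L_2^*L_2^{*T}=\Gamma_2^*$ therefore satisfies $L_2^*=U^TL_1^*V$ for some orthogonal $V$. Note that $V$ need not be $I_d$, for example when both factors are taken to be Cholesky factors. I will keep $V$ general; the claim becomes that $g_2^\mathrm I$ is an orthogonal transformation of $g_1^\mathrm I$, namely by $V$ rather than by $U$.

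Then substitute into \eqref{eq:c3ISLapISintegrand}, using $L_2^*\bm z+\bm\mu_2=U^T(L_1^*V\bm z+\bm\mu_1)$, which follows from $\bm\mu_2=U^T\bm\mu_1$. This gives three identities:
\[
g_2(L_2^*\bm z+\bm\mu_2)=g_1(L_1^*V\bm z+\bm\mu_1),
\]
\[
\det\Gamma_2^*=\det\Gamma_1^*,
\]
\[
(L_2^*\bm z+\bm\mu_2)^T(L_2^*\bm z+\bm\mu_2)=(L_1^*V\bm z+\bm\mu_1)^T(L_1^*V\bm z+\bm\mu_1),
\]
the last because $U$ preserves norms. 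Together with $\bm z^T\bm z=(V\bm z)^T(V\bm z)$, these yield $g_2^\mathrm I(\bm z)=g_1^\mathrm I(V\bm z)$.

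Finally, Theorem~\ref{thm:c3ivarrianceofAS} applied to $g_1^\mathrm I$ with the orthogonal matrix $V$ shows that $g_1^\mathrm I$ and $g_2^\mathrm I$ have the same active subspaces, so $g_1$ and $g_2$ share the same IS-active subspaces. Corollary~\ref{cor:c3ivarrianceofAS} then gives $g_1^\mathrm{IA}=g_2^\mathrm{IA}$.

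I expect the factor step to be the main obstacle. The naive identity $L_2^*=U^TL_1^*$ fails for canonical factorizations, and the argument goes through only because the likelihood-ratio factor depends on $\bm z$ solely through $\|\bm z\|$ and $\|L\bm z+\bm\mu\|$, both of which are invariant under the extra orthogonal matrix $V$. The proof also needs the technical assumption that $I_d-\nabla^2F_1(\bm\mu_1)$ is positive definite, so that $\Gamma^*$ exists; this is implicit in the definition of Laplace IS.
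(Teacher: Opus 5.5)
Your proposal is correct and follows the paper's proof essentially step for step. The shared steps are: $\bm\mu_1=U\bm\mu_2$ from uniqueness; $\Gamma_2^*=U^T\Gamma_1^*U$ via the Hessian of $\ln g$; Lemma~\ref{lem:c3lemma1} applied to the Cholesky factors to get $L_2^*=U^TL_1^*V$; substitution giving $g_2^\mathrm{I}(\bm z)=g_1^\mathrm{I}(V\bm z)$; and then Theorem~\ref{thm:c3ivarrianceofAS} and Corollary~\ref{cor:c3ivarrianceofAS}. If anything, your version is slightly more careful than the paper's: you evaluate the drift equation and the Hessian at $U\bm\mu_2=\bm\mu_1$, and you state the uses of $\|V\bm z\|=\|\bm z\|$ and of the positive definiteness of $I_d-\nabla^2F_1(\bm\mu_1)$ explicitly.
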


\begin{proof}
  Let $\bm{\mu}_1,\,\bm{\mu}_2$ and $\Gamma_1,\,\Gamma_2$ denote the drift vectors and covariance matrices obtained by the Laplace IS for $g_1(\bm{z})$ and $g_2(\bm{z})$, respectively.

  From the proof of Theorem~\ref{thm:c3ivarrianceofISAS1}, we already know that
  \begin{equation}
    \bm{\mu}_1 = U\bm{\mu}_2.
  \end{equation}
  Define $F_1(\bm{z})=\ln(g_1(\bm{z}))$ and $F_2(\bm{z})=\ln(g_2(\bm{z}))$. Then
  \begin{equation}
    F_2(\bm{z}) = F_1(U\bm{z}),
  \end{equation}
  which yields
  \begin{equation}
    \nabla^2F_2(\bm{z}) = U^{\!T}\nabla^2F_1(U\bm{z})U.
  \end{equation}
  Using~\eqref{eq:c3ISLapISGamma}, we obtain
  \begin{align}
    \Gamma_2 = U^{\!T}\Gamma_1 U.
  \end{align}
  Consequently,
  \begin{equation}
    \det(\Gamma_1)=\det(\Gamma_2).
  \end{equation}

  Let \(L_1\) and \(L_2\) be the Cholesky factorization of \(\Gamma_1\) and \(\Gamma_2\), respectively, thus
  \begin{equation}
    L_1L_1^{\!T}=\Gamma_1,\qquad
    L_2L_2^{\!T}=\Gamma_2.
  \end{equation}
  Since
  \begin{equation}
    \Gamma_2 = U^{\!T}L_1L_1^{\!T}U,
  \end{equation}
  Lemma~\ref{lem:c3lemma1} guarantees the existence of an orthogonal matrix $V$ such that
  \begin{equation}
    L_2 = U^{\!T} L_1 V.
  \end{equation}

  Using~\eqref{eq:c3ISLapISintegrand}, the integrand after Laplace IS satisfies
  \begin{align}
    g_2^\mathrm{I}(\bm{z})
    &= g_2(L_2\bm{z}+\bm{\mu}_2)\sqrt{\det(\Gamma_2)}
       \exp\!\left(\frac{1}{2}\bm{z}^{\!T}\bm{z} - \frac{1}{2}(L_2\bm{z}+\bm{\mu}_2)^{\!T}(L_2\bm{z}+\bm{\mu}_2)\right) \notag\\
    &= g_1(L_1V\bm{z}+\bm{\mu}_1)\sqrt{\det(\Gamma_1)}
       \exp\!\left(\frac{1}{2}\bm{z}^{\!T}\bm{z} - \frac{1}{2}(L_1V\bm{z}+\bm{\mu}_1)^{\!T}(L_1V\bm{z}+\bm{\mu}_1)\right) \notag\\
    &= g_1^\mathrm{I}(V\bm{z}),
  \end{align}
  which confirms that $g_2^\mathrm{I}(\bm{z})$ is given by an orthogonal transformation of $g_1^\mathrm{I}(\bm{z})$.
  Theorem~\ref{thm:c3ivarrianceofAS} implies that  $g_1^\mathrm{I}(\bm{z})$ and $g_2^\mathrm{I}(\bm{z})$ have the same active subspace, and thus $g_1(\bm{z})$ and $g_2(\bm{z})$ share the same IS-active subspace.

  Finally, Corollary~\ref{cor:c3ivarrianceofAS} gives $g_1^\mathrm{IA}(\bm{z})=g_2^\mathrm{IA}(\bm{z})$.
\end{proof}

Similar to Corollary~\ref{cor:c3ASindependentofBM}, it is easy to verify that, for Brownian motion driven problems, the IS-active subspace is independent of the choice of the generation matrix of the Brownian motion.

\begin{corollary}
  For problems driven by Brownian motion, suppose that $R_1$ and $R_2$ are two generation matrices of the Brownian motion, and functions of interest are $g_1(\bm{z})=f(R_1\bm{z})$ and $g_2(\bm{z})=f(R_2\bm{z})$. Let $g_1^\mathrm{I}(\bm{z})$ and $g_2^\mathrm{I}(\bm{z})$ denote the integrands obtained by optimal drift IS or Laplace IS. Then $g_1(\bm{z})$ and $g_2(\bm{z})$ have the same IS-active subspaces, and hence their expressions in the $d$-dimensional IS-active subspace coincide, i.e., $g_1^\mathrm{IA}(\bm{z})=g_2^\mathrm{IA}(\bm{z})$.
  \label{cor:c3ISASindependentofBM2}
\end{corollary}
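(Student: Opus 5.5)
The plan is to reduce Corollary~\ref{cor:c3ISASindependentofBM2} to Theorems~\ref{thm:c3ivarrianceofISAS1} and~\ref{thm:c3ivarrianceofISAS2} by way of Lemma~\ref{lem:c3lemma1}. This parallels how Corollary~\ref{cor:c3ASindependentofBM} follows from Theorem~\ref{thm:c3ivarrianceofAS}.

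First, since $R_1$ and $R_2$ are both generation matrices of the same Brownian motion, $R_1R_1^T=R_2R_2^T=\Sigma$, where $\Sigma$ is the (positive definite) covariance matrix of the discretized path. Lemma~\ref{lem:c3lemma1} then gives an orthogonal $U$ with $R_2=R_1U$. Hence
\[
g_2(\bm{z})=f(R_2\bm{z})=f(R_1U\bm{z})=g_1(U\bm{z}),
\]
which is exactly the relation $g_2(\bm{z})=g_1(U\bm{z})$ assumed in both theorems.

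Second, check the remaining hypotheses. The function $g_1=f(R_1\cdot)$ must be nonnegative and differentiable almost everywhere. Nonnegativity is the standing assumption $g\ge 0$ made for the IS step in Section~\ref{subsec:Detailsof3StepMethod}. Differentiability almost everywhere is inherited from $f$, because $R_1$ is an invertible linear map and so sends null sets to null sets. The uniqueness of the optimal drift is the standing assumption of this subsection.

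With these in place, Theorem~\ref{thm:c3ivarrianceofISAS1} (optimal drift IS) or Theorem~\ref{thm:c3ivarrianceofISAS2} (Laplace IS) applies directly. Either one gives that $g_2^\mathrm{I}$ is an orthogonal transformation of $g_1^\mathrm{I}$, that the IS-active subspaces coincide, and that $g_1^\mathrm{IA}(\bm{z})=g_2^\mathrm{IA}(\bm{z})$.

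The only step needing care is the identification of the hypotheses, in particular the almost-everywhere differentiability and the uniqueness assumption. There is also a subtlety in Theorem~\ref{thm:c3ivarrianceofAS}: eigenvectors are determined only up to sign, and up to rotation within repeated eigenvalues. The equality $g_1^\mathrm{IA}=g_2^\mathrm{IA}$ should therefore be read with the eigenvector matrices chosen consistently ($Q=U^TP$), or under the sign normalization of the first column used in Algorithm~\ref{al:ISASpreint}, assuming simple eigenvalues. I would add a remark to this effect rather than attempt anything more.
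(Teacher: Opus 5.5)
Your proposal is correct and follows the paper's own route: the paper derives this corollary the same way as Corollary~\ref{cor:c3ASindependentofBM}, namely Lemma~\ref{lem:c3lemma1} gives $R_2=R_1U$, hence $g_2(\bm{z})=g_1(U\bm{z})$, and Theorems~\ref{thm:c3ivarrianceofISAS1} and~\ref{thm:c3ivarrianceofISAS2} finish the argument. In your closing remark only the first reading is right (the consistent choice $Q=U^TP$, which is $Q=V^TP$ in the Laplace case), because the first-column sign normalization of Algorithm~\ref{al:ISASpreint} fixes only one sign and is not invariant under $Q\mapsto U^TQ$ (for $R_2=-R_1$ one gets $C_2=C_1$, hence the same normalized $Q$, and $g_2^\mathrm{IA}(\bm{z})=g_1^\mathrm{IA}(-\bm{z})$); a convention compatible with the corollary would have to be imposed on the columns of $RQ$ (resp.\ $RL^*Q$) instead of $Q$.
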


Corollary~\ref{cor:c3ISASindependentofBM2} shows that, for problems driven by Brownian motion,  only the standard construction of Brownian motion needs to be considered if the preintegration-IS-GPCA method or the IS-AS-preintegration three-step method is used. Therefore, in Section~\ref{sec:appnexp}, only the standard Brownian-motion construction is adopted in the numerical experiments.

\section{Applications to Finance}
\label{sec:appnexp}
In this section, we consider the option pricing problem and sensitivity analysis problem of Asian call options based on the Black-Scholes (BS) model. 
We begin by describing the generation of stock prices, a process that relies on simulating Brownian motion paths.

Let $S(t)$ denote the stock price at time $t$, $\sigma$ the constant volatility, and $r$ the risk-free interest rate. Let $B(t)$ be a one-dimensional standard Brownian motion. 
Under the risk-neutral measure, the stock price $S(t)$ satisfies the stochastic differential equation (SDE)
\begin{equation}
  \mathrm{d} S(t)=rS(t)\,\mathrm{d} t+\sigma S(t)\mathrm{d} B(t).
\end{equation}
For the initial price $S(0)$, the SDE have the analytic solution
\begin{equation}
  S(t)=S(0)\exp\left(\left(r-\frac{\sigma^2}{2}\right)t+\sigma B(t)\right).
  \label{eq:c2solutionofSDE}
\end{equation}

From \eqref{eq:c2solutionofSDE}, the construction of Brownian motion paths is required due to the path of the underlying asset need to be known in pricing Asian options.
Let $0=t_0<t_1<\dots<t_d=T$,where \(T\) is the maturity of the option, and assume the time steps are equally spaced with step size $\Delta t = \frac{T}{d}$, so that $t_j=j\Delta t$ for $j=1,2,\dots,d$.  
The properties of Brownian motion gives
\begin{equation}
  \bm{B}(\bm{t}):=
  \begin{bmatrix}
    B(t_1)\\
    B(t_2)\\
    \vdots\\
    B(t_d)
  \end{bmatrix}
  \sim
  N\!\left(\bm{0},\Sigma \right),
\end{equation}
where
\begin{equation}
  \Sigma = 
  \Delta t\!
  \begin{bmatrix}
      1 & 1 & 1 & \cdots & 1\\
      1 & 2 & 2 & \cdots & 2\\
      1 & 2 & 3 & \cdots & 3\\
      \vdots   & \vdots    &  \vdots  & \ddots & \vdots\\
      1 & 2 & 3 & \cdots & d
  \end{bmatrix}.
\end{equation}
For any matrix factorization $\Sigma = RR^T$ and $\bm{z}\sim N(\bm{0},I_d)$, we have $R\bm{z}\sim N(\bm{0},\Sigma)$. Consequently, a Brownian motion path can be simulated by $\bm{B}(\bm{t}) = R\bm{z}$, and $R$ is called the generation matrix of the Brownian motion.
If $\Sigma =LL^T$ is the Cholesky factorization, then the generation matrix $R_{\mathrm{STD}} = L$ yields the standard construction of Brownian motion.  
For equally spaced time steps,
\begin{equation}
  R_{\mathrm{STD}}=\sqrt{\Delta t}
  \begin{bmatrix}
      1 & 0 & \cdots & 0\\
      1 & 1 & \cdots & 0\\
      \vdots & \vdots & \ddots & \vdots\\
      1 & 1 & \cdots & 1
  \end{bmatrix}.
  \label{eq:c2BrownianSTD}
\end{equation}
If $\Sigma = P D P^T$ is the eigendecomposition, where $P$ is orthogonal and $D$ is a diagonal matrix containing eigenvalues in decreasing order, then the generation matrix can be taken as $R_{\mathrm{PCA}} = P D^{1/2}$, which gives the PCA construction of Brownian motion. Moreover, Brownian motion can be constructed using the BB method whose generation matrix is more complicated and may be written analytically in some special cases \cite{wangEffectsDimensionReduction2006}.

\subsection{Asian Option}
\label{subsec:AsianOptionPricing}
An Asian option is a path-dependent option.  
Let the strike price be $K$ and the maturity date be $T$.  
Let $0=t_0<t_1<\dots<t_d=T$ be equally spaced observation times, and let $\Delta t=\frac{T}{d}$ so that $t_j=j\Delta t$.  
For simplicity, write $S_j=S(t_j)$ and $B_j=B(t_j)$.  
Suppose that the Brownian motion is generated by $\bm{B}=R\bm{z}$ for $\bm{z}\sim N(\bm{0},I_d)$.  
Then
\begin{align}
  S_j 
  &= S_0 \exp\!\left(\left(r-\frac{\sigma^2}{2}\right)j\Delta t+\sigma B_j\right) \notag\\
  &= S_0 \exp\!\left(\left(r-\frac{\sigma^2}{2}\right)j\Delta t+\sigma R_{[j,:]}\bm{z}\right),
  \label{eq:c2generalSj}
\end{align}
where $R_{[j,:]}$ denotes the $j$-th row of $R$.
Define 
\begin{align}
  \overline{S}
  = \overline{S}(R,\bm{z})
  &= \frac{1}{d}\sum_{j=1}^{d} S_j \notag \\
  &= \frac{S_0}{d}\sum_{j=1}^{d}
     \exp\!\left(\left(r-\frac{\sigma^2}{2}\right)j\Delta t+\sigma R_{[j,:]}\bm{z}\right),
\end{align}
so that the payoff of an Asian call option is
\begin{equation}
  g(\bm{z}) := f(R\bm{z})
  := \left(\overline{S}-K\right)_+,
  \label{eq:c2Aisiandiscontpayoff}
\end{equation}
where $(x)_+=x\bm{1}_{\{x>0\}}$ and $\bm{1}_{\{x>0\}}$ is the indicator function.  
Thus, the pricing problem converts to evaluating the integral
\begin{align}
  c = \exp(-rT)\mathbb{E}(g(\bm{z}))
  = \exp(-rT)\mathbb{E}\!\left(\left(\overline{S}-K\right)_+ \right).
  \label{eq:c2AsianPrice}
\end{align}

\subsubsection{Solving Optimal Drift}

According to equation~\eqref{eq:c3ISODISmiu}, the optimal drift \(\bm{\mu}^*\) can be determined as described in \cite{glassermanMonteCarloMethods2003}. Under the standard Brownian motion construction, the asset price can be expressed as
\begin{equation}
S_j = S_0 \exp\left(\left(r - \frac{\sigma^2}{2}\right) j \Delta t + \sigma \sqrt{\Delta t} \sum_{k=1}^{j} z_k\right).
\label{eq:c4STDSj}
\end{equation}
For the payoff function of the Asian option given in~\eqref{eq:c2Aisiandiscontpayoff}, let \(y = \overline{S} - K\). When \(y \ge 0\), the following equations need to be solved
\begin{equation}
\frac{\partial y}{\partial z_i} = y z_i, \quad i = 1, 2, \dots, d,
\label{eq:c4STDmiu_i}
\end{equation}
i.e., 
\[
z_i = \frac{1}{y d} \sum_{j=i}^{d} S_j \sigma \sqrt{\Delta t}, \quad i = 1, 2, \dots, d.
\]
The recurrence relations for \(z_1, z_2, \dots, z_d\) are given by
\begin{equation}
z_1 = \frac{1}{y} \sigma \sqrt{\Delta t} (y + K),
\label{eq:c4iteration_z1}
\end{equation}
\begin{equation}
z_{i+1} = z_i - \frac{1}{y d} \sigma \sqrt{\Delta t} S_i, \quad i = 1, 2, \dots, d-1.
\label{eq:c4iteration}
\end{equation}
For a given value \(y\), a corresponding vector \(\bm{z}\) can be unique determined.
Therefore, \(\overline{S}\) can be regarded as a function of \(y\), i.e., \(\overline{S} = \overline{S}(y)\). Hence, equations~\eqref{eq:c4STDmiu_i} can be transformed into a scalar equation:
\begin{equation}
\overline{S}(y) - K = y.
\label{eq:c4equationofy}
\end{equation}
Solving equation~\eqref{eq:c4equationofy}, the optimal drift \(\bm{\mu}^*\) can be obtained by the recurrence formulas~\eqref{eq:c4iteration_z1} and~\eqref{eq:c4iteration}.

\subsubsection{Deriving new integrand}

Under the optimal drift, the formulation of integrand in the IS-active subspace is derived as follows. According to~\eqref{eq:c3ISODISintegrand},
\begin{equation}
\begin{aligned}
g^\mathrm{I}(\bm{z})
=& f(R(\bm{z} + \bm{\mu}^*)) \exp\left(-{\bm{\mu}^*}^T \bm{z} - \frac{1}{2} {\bm{\mu}^*}^T \bm{\mu}^*\right) \\
=& \exp\left(-\frac{1}{2} {\bm{\mu}^*}^T \bm{\mu}^* - {\bm{\mu}^*}^T \bm{z}\right)\times\\
&\left(\frac{S_0}{d} \sum_{j=1}^{d} \exp\left(\left(r - \frac{\sigma^2}{2}\right) j \Delta t + \sigma R_{[j,:]}(\bm{z} + \bm{\mu}^*)\right) - K \right)_+.
\end{aligned}
\label{eq:c4myODISintegrand}
\end{equation}

In practice, the gradient of \(g^\mathrm{I}(\bm{z})\) can be estimated by the finite difference approximation~\eqref{eq:c3estimategradient}, i.e.
\begin{equation}
\frac{\partial g^\mathrm{I}}{\partial z_i} \approx \frac{g^\mathrm{I}(\bm{z}_i + \varepsilon \bm{e}_i) - g^\mathrm{I}(\bm{z}_i)}{\varepsilon}, \quad i = 1, 2, \dots, d,
\label{eq:c4estimategradient}
\end{equation}
where \(\varepsilon > 0\) is a small perturbation and \(\{\bm{e}_1, \dots, \bm{e}_d\}\) denote the standard orthonormal basis of \(\mathbb{R}^d\).

Then the gradient information matrix \(C\) can be computed according to~\eqref{eq:c3mygradientISASC}, and can be decomposed as \(C = Q \Lambda Q^T\), where elements of the first column of \(Q\) are taken to be nonnegative. The active subspace of the importance sampled integrand \(g^\mathrm{I}(\bm{z})\) is thus obtained. The corresponding variable transformation is performed as
\begin{equation}
\begin{aligned}
g^\mathrm{IA}(\bm{z})
=& g^\mathrm{I}(Q \bm{z}) \\
=& \exp\left(-\frac{1}{2} {\bm{\mu}^*}^T \bm{\mu}^* - {\bm{\mu}^*}^T Q \bm{z}\right)\times\\
&\left(\frac{S_0}{d} \sum_{j=1}^{d} \exp\left(\left(r - \frac{\sigma^2}{2}\right) j \Delta t + \sigma R_{[j,:]}(Q \bm{z} + \bm{\mu}^*)\right) - K \right)_+.
\end{aligned}
\label{eq:c4myISASfunction}
\end{equation}
Define
\begin{equation}
\hat{k}_0 = \exp\left(-\frac{1}{2} {\bm{\mu}^*}^T \bm{\mu}^*\right),
\label{eq:c4myISASfunctionk0}
\end{equation}
\begin{equation}
\hat{k}_1(\bm{z}) = \frac{S_0}{d} \sum_{j=1}^{d} \exp\left(\left(r - \frac{\sigma^2}{2}\right) j \Delta t + \sigma R_{[j,:]}(Q \bm{z} + \bm{\mu}^*) - {\bm{\mu}^*}^T Q \bm{z}\right),
\label{eq:c4myISASfunctionk1}
\end{equation}
\begin{equation}
\hat{k}_2(\bm{z}) = -K \exp\left(-{\bm{\mu}^*}^T Q \bm{z}\right),
\label{eq:c4myISASfunctionk2}
\end{equation}
\begin{equation}
\hat{k}_3(\bm{z}) = \frac{S_0}{d} \sum_{j=1}^{d} \exp\left(\left(r - \frac{\sigma^2}{2}\right) j \Delta t + \sigma R_{[j,:]}(Q \bm{z} + \bm{\mu}^*)\right) - K,
\label{eq:c4myISASfunctionk3}
\end{equation}
then we have
\begin{equation}
g^\mathrm{IA}(\bm{z}) = \hat{k}_0 \left(\hat{k}_1(\bm{z}) + \hat{k}_2(\bm{z})\right) \bm{1}\{\hat{k}_3(\bm{z}) > 0\}.
\label{eq:c4myISASfunctioneasy}
\end{equation}

\subsubsection{Preintegration}

According to \eqref{eq:c4myISASfunctioneasy}, we first consider separating variables in the discontinuous part
$\bm{1}\{\hat{k}_3(\bm{z})>0\}$ of $g^\mathrm{IA}(\bm{z})$.
Following the arguments in Section~\ref{subsec:Detailsof3StepMethod}, if all elements in $R$ are nonnegative, for instance by choosing
$R=R_{\mathrm{STD}}$, then elements of the first column of $RQ$ are nonnegative. Consequently, the discontinuous part $\hat{k}_3(\bm{z})$ is monotone with respect to $z_1$.
Therefore, for a fixed $\bm{z}_{-1}$, the equation
\begin{equation}
    \hat{k}_3(z_1,\bm{z}_{-1}) = 0
\end{equation}
has a unique solution $\gamma=\gamma(\bm{z}_{-1})$.
Hence, the discontinuous part can be written as
\begin{equation}
    \bm{1}\{\hat{k}_3(\bm{z})>0\} = \bm{1}\{z_1>\gamma\}.
\end{equation}

According to \eqref{eq:c3mypreintfunction}, we have
\begin{align}
    g^\mathrm{IAP}(\bm{z})
    &= \mathbb{E}\!\left(g^\mathrm{IA}(\bm{z}) \mid \bm{z}_{-1}\right) \notag \\
    &= \mathbb{E}\!\left(
    \hat{k}_0\!\left(\hat{k}_1(\bm{z})+\hat{k}_2(\bm{z})\right)
    \bm{1}\{\hat{k}_3(\bm{z})>0\}
    \mid \bm{z}_{-1}\right) \notag \\
    &= \hat{k}_0\!\left(
        \int_{\gamma}^{+\infty}\hat{k}_1(\bm{z})\varphi(z_1)\,\mathrm{d} z_1
        + \int_{\gamma}^{+\infty}\hat{k}_2(\bm{z})\varphi(z_1)\,\mathrm{d} z_1
        \right) \notag \\
    &= \hat{k}_0\left(I_1+I_2\right),
    \label{eq:c4mypreintfunction}
\end{align}
where
\begin{align}
    I_1 &= \int_{\gamma}^{+\infty}\hat{k}_1(\bm{z})\varphi(z_1)\,\mathrm{d} z_1,
    \label{eq:c4mypreintfunctionI1}
\end{align}
and
\begin{align}
    I_2 &= \int_{\gamma}^{+\infty}\hat{k}_2(\bm{z})\varphi(z_1)\,\mathrm{d} z_1.
    \label{eq:c4mypreintfunctionI2}
\end{align}

Define the row vector $\bm{\beta}={\bm{\mu}^*}^T Q$ and the matrix
$W=\sigma RQ-\bm{1}_{d\times 1}\bm{\beta}$,
where $\bm{1}_{d\times 1}$ denotes the $d\times 1$ vector with all elements equal to~$1$.
Let $\beta_k$ denote the $k$-th component of $\bm{\beta}$, and let $W_{[j,k]}$ denote the $(j,k)$-element of~$W$.
Let $\tilde{\Phi}(x)=1-\Phi(x)$ be the survival function of the standard normal distribution.
We now derive explicit expressions for $I_1$ and $I_2$.

\begin{align}
    I_1
    &= \int_{\gamma}^{+\infty}
    \frac{S_0}{d}\sum_{j=1}^{d}
    \exp\!\left(
    \left(r-\frac{\sigma^2}{2}\right)j\Delta t
    + \sigma R_{[j,:]}(Q\bm{z}+\bm{\mu}^*)
    - {\bm{\mu}^*}^T Q\bm{z}
    \right)
    \varphi(z_1)\,\mathrm{d} z_1 \notag \\
    &= \frac{S_0}{d}
    \sum_{j=1}^{d}
    \exp\!\left(
    \left(r-\frac{\sigma^2}{2}\right)j\Delta t
    + \sigma R_{[j,:]}\bm{\mu}^*
    + \sum_{k=2}^{d} W_{[j,k]} z_k
    \right)
    \int_{\gamma}^{+\infty}
    \exp\!\left(W_{[j,1]} z_1\right)
    \varphi(z_1)\,\mathrm{d} z_1 \notag \\
    &= \frac{S_0}{d}
    \sum_{j=1}^{d}
    \exp\!\left(
    \left(r-\frac{\sigma^2}{2}\right)j\Delta t
    + \sigma R_{[j,:]}\bm{\mu}^*
    + \sum_{k=2}^{d} W_{[j,k]} z_k
    + \frac{W_{[j,1]}^2}{2}
    \right)
    \tilde{\Phi}\!\left(\gamma - W_{[j,1]}\right),
    \label{eq:c4mypreintfunctionI1expression}
\end{align}

\begin{align}
    I_2
    &= \int_{\gamma}^{+\infty}
    -K \exp\!\left(-{\bm{\mu}^*}^T Q\bm{z}\right)
    \varphi(z_1)\,\mathrm{d} z_1 \notag \\
    &= -K \exp\!\left(-\sum_{k=2}^{d}\beta_k z_k\right)
    \int_{\gamma}^{+\infty}
    \exp\!\left(-\beta_1 z_1\right)
    \varphi(z_1)\,\mathrm{d} z_1 \notag \\
    &= -K \exp\!\left(
    -\sum_{k=2}^{d}\beta_k z_k + \frac{\beta_1^2}{2}
    \right)
    \tilde{\Phi}\!\left(\gamma + \beta_1\right),
    \label{eq:c4mypreintfunctionI2expression}
\end{align}

At this point, the explicit analytic expression of $g^\mathrm{IAP}(\bm{z})$ in
\eqref{eq:c4mypreintfunction} has been fully derived.
The price of the Asian call option can thus be written as
\begin{equation}
    c = \exp(-rT)\,\mathbb{E}\!\left(g^\mathrm{IAP}(\bm{z})\right).
\end{equation}

\subsection{Delta of Asian Option}
We consider the estimation of the Delta of an Asian option in this subsection.
The Delta of an option is defined as the partial derivative of the option price with respect to the underlying asset price. 
For Asian options, the pathwise (PW) estimator of Delta
\cite{Glynnlikelihoodratiogradient1987,BroadieEstimatingsecurityprice1996}
is given by
\begin{equation}
    \Delta
    = \mathbb{E}\!\left(
    e^{-rT}\frac{\overline{S}}{S_0}
    \bm{1}\{\overline{S}-K>0\}
    \right).
\end{equation}

It is obvious that the integrand in the PW estimator of Delta is a discontinuous function.
When applying our three-step method to estimate this integral, we continue to employ the same IS-active subspace as in the option pricing problem, for two main reasons.

First, from the perspective of IS, if IS is applied only to the continuous component $\overline{S}$ of the integrand, we argue that such a density cannot adequately adapt to the entire integrand since the resulting sampling density becomes independent of the strike price $K$.
Noting that the continuous component $\overline{S}$ and the discontinuous part $\overline{S}-K$ are very similar, differing only by a shift, we contend that the IS density associated with $\overline{S}-K$ is better suited for the full integrand.

Second, from the viewpoint of active subspaces, although the presence of discontinuities does not invalidate the definition of active subspaces, in practice, the estimated gradient information matrix may deviate significantly from its true value due to the extremely large gradients caused by the sample points near the discontinuity.
As a result, the active subspace may fail to accurately capture the structure of the integrand.
Therefore, when estimating Delta, we continue to use the IS-active subspace developed for the Asian option pricing problem.
The formulas required for Delta estimation are derived in a manner similar to those in Section~\ref{subsec:AsianOptionPricing}.

\section{Numerical Experiments}
\label{sec:numexp}
In this section, some numerical experiments are given in order to verify the effectiveness of the proposed algorithm. 
\subsection{Asian Option Pricing}
\subsubsection{Convergence Analysis}
This experiment compares the convergence behavior of the proposed method with that of existing methods. To be clear, in this section we use IS\_AS\_Preint to denote the proposed three-step method.

The parameters are chosen as in \cite{heErrorRateConditional2019,liuPreintegrationActiveSubspace2023}, where dimension $d=50$, maturity $T=1$, volatility $\sigma=0.4$, risk-free interests $r=0.1$, initial stock price  $S_0=100$, and strike price $K=50,80,100,120,150$, corresponding respectively to deep in-the-money, in-the-money, at-the-money, out-of-the-money, and deep out-of-the-money options.
For each sample size $n=2^N$ with $N=1,2,\dots,17$, we repeat the simulation $m=50$ times to achieve a standard error.
For methods requiring the gradient information matrix, $M=128$ samples are used to estimate it.
The Brownian motion is constructed by using the standard construction.

We consider the following six methods:

\begin{itemize}
  \item[(1)] \textbf{IS\_AS\_Preint} (IS-AS-preintegration): the method proposed in this paper;
  \item[(2)] \textbf{Preint\_IS\_GPCA} (preintegration-IS-GPCA) \cite{zhangEfficientImportanceSampling2021}: one of the currently popular methods;
  \item[(3)] \textbf{AS\_Preint} (AS-preintegration) \cite{liuPreintegrationActiveSubspace2023}: another popular method currently;
  \item[(4)] \textbf{Preint\_GPCA} (preintegration-GPCA) \cite{xiaoConditionalQuasiMonteCarlo2018}: commonly used as a comparative method in other studies;
  \item[(5)] \textbf{RQMC} (standard randomized QMC method);
  \item[(6)] \textbf{MC} (crude MC method): used as a baseline.
\end{itemize}

The numerical results are shown in Figures~\ref{fig:K50}-\ref{fig:K150}.
In all figures, the vertical axis represents the logarithm of the standard error, which is the square root of the variance computed according to \cite{glassermanMonteCarloMethods2003}.

\begin{figure}
    \centering
    \includegraphics[width=0.6\linewidth]{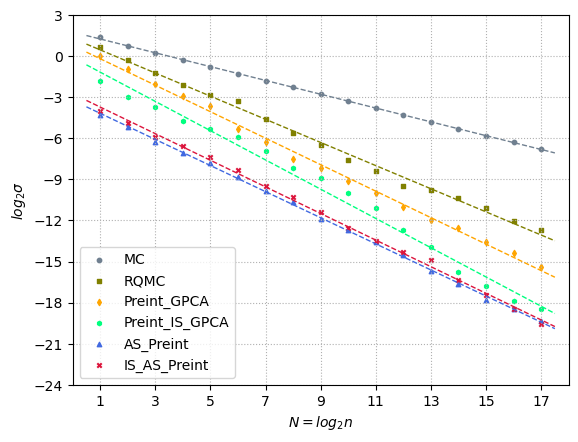}
    \caption{Convergence behavior in the deep in-the-money case ($K=50$).}
    \label{fig:K50}
\end{figure}

\begin{figure}
    \centering
    \includegraphics[width=0.6\linewidth]{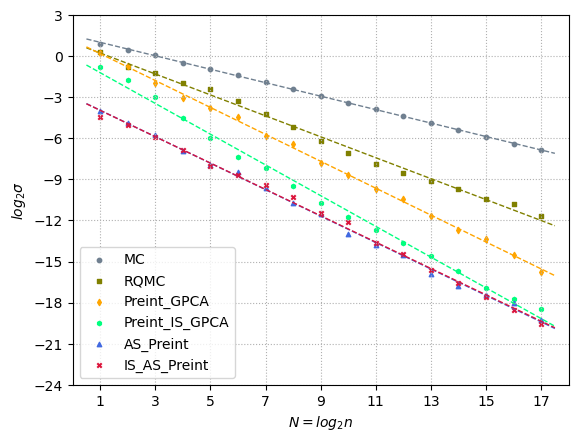}
    \caption{Convergence behavior in the in-the-money case ($K=80$).}
    \label{fig:K80}
\end{figure}

\begin{figure}
    \centering
    \includegraphics[width=0.6\linewidth]{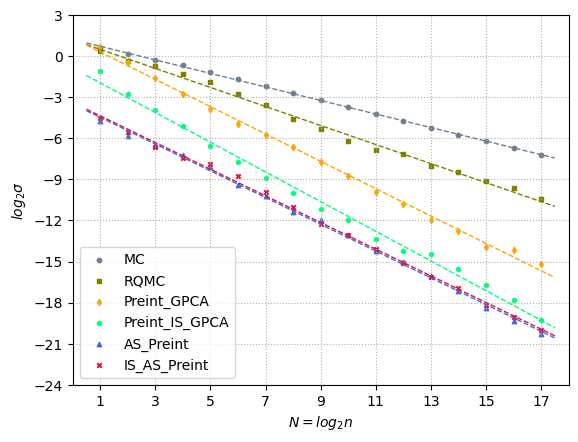}
    \caption{Convergence behavior in the at-the-money case ($K=100$).}
    \label{fig:K100}
\end{figure}

\begin{figure}
    \centering
    \includegraphics[width=0.6\linewidth]{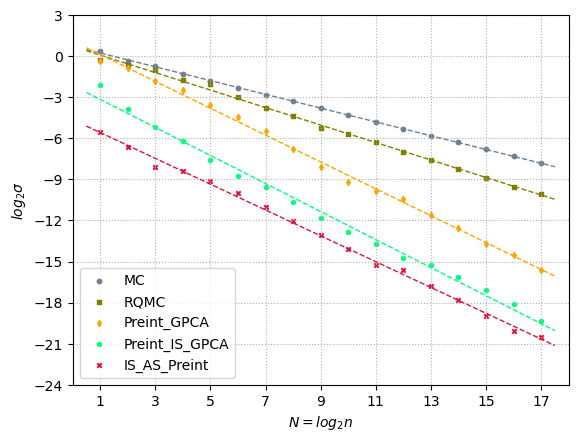}
    \caption{Convergence behavior in the out-of-the-money case ($K=120$).}
    \label{fig:K120}
\end{figure}

\begin{figure}
    \centering
    \includegraphics[width=0.6\linewidth]{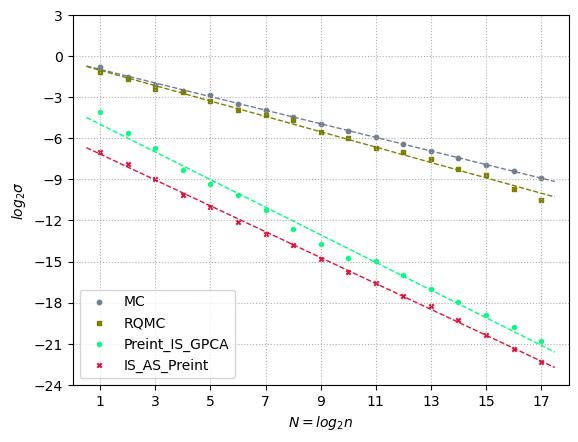}
    \caption{Convergence behavior in the deep out-of-the-money case ($K=150$).}
    \label{fig:K150}
\end{figure}

From Figures~\ref{fig:K50}-\ref{fig:K100}, we observe that for deep in-the-money, in-the-money, and at-the-money option pricing problems, the proposed IS\_AS\_Preint method exhibits nearly the same convergence efficiency as the AS\_Preint method. When the number of samples is relatively small, IS\_AS\_Preint significantly outperforms the Preint\_IS\_GPCA method.

In Figure~\ref{fig:K120}, the convergence curve of the AS\_Preint method is not shown, indicating that AS\_Preint fails when $K=120$. In Figure~\ref{fig:K150}, neither AS\_Preint nor Preint\_GPCA is plotted, indicating that both methods fail when $K=150$.
For out-of-the-money and deep out-of-the-money options, IS\_AS\_Preint achieves the best convergence performance among all methods and shows a substantial improvement over Preint\_IS\_GPCA.

For out-of-the-money and deep out-of-the-money options, directly computing the gradient information matrix of the payoff function yields values that are close to zero. As a result, methods without IS, such as AS\_Preint and Preint\_GPCA, fail in these cases.
Moreover, AS\_Preint method is more prone to failure than Preint\_GPCA method for the following reasons. Actually, AS\_Preint method computes an active subspace for a $d$-dimensional function, whereas Preint\_GPCA method performs GPCA on a $(d-1)$-dimensional function due to one dimension is preintegrated. 
Solving a $d$-dimensional active subspace problem is equivalent to performing GPCA on a function of the same dimension. For the same problem, the effective dimension cannot be increased by reducing the nominal dimension. Consequently, Preint\_GPCA provides a better estimation of the gradient information matrix.

\subsubsection{Analysis of Variance Reduction Factors}

This experiment analyzes the individual contributions of preintegration, the AS method, and IS to the IS\_AS\_Preint method for different strike prices by computing and comparing the variance reduction factors.

Except for the strike price, all parameters are identical to those in the previous experiment. The strike price is chosen as $K=50,60,\dots,150$.
In addition, we consider three basic methods for reference:
\begin{itemize}
  \item[(7)] \textbf{Preint} (preintegration only);
  \item[(8)] \textbf{AS} (the AS method only);
  \item[(9)] \textbf{IS} (IS only).
\end{itemize}

The variance reduction factors (VRF) are computed according to \cite{glassermanMonteCarloMethods2003}. The results for the four basic methods (RQMC, Preint, IS, AS) are shown in Table~\ref{tab:VRFbasic}, while the results for the four combined methods (Preint\_GPCA, Preint\_IS\_GPCA, AS\_Preint, IS\_AS\_Preint) are shown in Table~\ref{tab:VRFadvanced}.

\begin{table}[tbhp]
\footnotesize
\caption{Comparison of VRF for basic methods with $n=2^{17}$}
\label{tab:VRFbasic}
\begin{center}
\begin{tabular}{|c|c|c|c|c|} \hline
Strike price $K$ & RQMC & Preint & IS & AS \\ \hline
50  & 3.4e+03 & 5.7e+03 & 8.7e+03 & 3.2e+05 \\ 
60  & 3.5e+03 & 3.0e+03 & 1.2e+03 & 2.6e+05 \\ 
70  & 1.1e+03 & 1.8e+03 & 3.7e+02 & 2.9e+05 \\ 
80  & 7.7e+02 & 8.0e+02 & 1.1e+02 & 2.5e+05 \\ 
90  & 1.5e+02 & 3.0e+02 & 6.0e+01 & 1.0e+05 \\ 
100 & 8.9e+01 & 1.1e+02 & 6.0e+01 & 1.5e+05 \\ 
110 & 3.7e+01 & 8.9e+01 & 3.9e+01 & 8.9e+04 \\ 
120 & 2.4e+01 & 5.8e+01 & 5.2e+01 & Failed \\ 
130 & 1.6e+01 & 2.2e+01 & 7.5e+01 & Failed \\ 
140 & 8.5e+00 & 1.5e+01 & 6.4e+01 & Failed \\ 
150 & 8.8e+00 & 7.0e+00 & 1.1e+02 & Failed \\ \hline
\end{tabular}
\end{center}
\end{table}

\begin{table}[tbhp]
\footnotesize
\caption{Comparison of VRF for combined methods with $n=2^{17}$}
\label{tab:VRFadvanced}
\begin{center}
\begin{tabular}{|c|c|c|c|c|} \hline
Strike price $K$ & Preint\_GPCA & Preint\_IS\_GPCA & AS\_Preint & IS\_AS\_Preint \\ \hline
50  & 1.6e+05 & 1.1e+07 & 3.7e+07 & 5.1e+07 \\ 
60  & 2.2e+05 & 1.7e+07 & 5.2e+07 & 3.2e+07 \\ 
70  & 1.7e+05 & 1.3e+07 & 4.1e+07 & 4.0e+07 \\ 
80  & 2.3e+05 & 9.5e+06 & 2.8e+07 & 4.3e+07 \\ 
90  & 1.5e+05 & 1.0e+07 & 5.0e+07 & 4.6e+07 \\ 
100 & 6.4e+04 & 1.9e+07 & 7.2e+07 & 5.2e+07 \\ 
110 & 1.1e+05 & 7.2e+06 & 3.8e+07 & 5.6e+07 \\ 
120 & 4.9e+04 & 8.5e+06 & Failed & 4.6e+07 \\ 
130 & 3.9e+04 & 1.3e+07 & Failed & 5.0e+07 \\ 
140 & 1.9e+04 & 6.4e+06 & Failed & 7.9e+07 \\ 
150 & Failed   & 1.4e+07 & Failed & 1.2e+08 \\ \hline
\end{tabular}
\end{center}
\end{table}


From Table~\ref{tab:VRFbasic}, we observe the following phenomenons. 
The Preint method achieves the largest VRF for the deep in-the-money options, and VRF of the Preint method decrease as the strike price increases. This suggests that preintegration contributes primarily to the IS\_AS\_Preint method in the deep in-the-money, in-the-money, and at-the-money cases.
The IS method achieves the smallest VRF for the at-the-money option, indicating that IS tends to make more contribution to the IS\_AS\_Prenint method except at-the-money case.
The AS method achieves the largest VRF among the three basic methods when it is applicable, implying that the active subspace technique contributes most significantly to variance reduction in IS\_AS\_Preint.

From Table~\ref{tab:VRFadvanced}, we observe the following phenomenons.
For out-of-the-money options, the IS\_AS\_Preint method achieves the largest VRF among all methods, exceeding Preint\_IS\_GPCA by up to an order of magnitude, while AS\_Preint almost completely fails in this case.
For in-the-money and at-the-money options, the VRFs of IS\_AS\_Preint and AS\_Preint are comparable and both exceed that of Preint\_IS\_GPCA.
These results demonstrate the clear advantage of the proposed IS\_AS\_Preint method.

For different sample sizes $n=2^N$, $N=12,13,\dots,17$, we further analyze the relationship between the VRF of IS\_AS\_Preint and the strike price $K$, as shown in Figure~\ref{fig:VRF-K}.

\begin{figure}[tbhp]
\centering
\subfloat[$n=2^{12}$]{%
  \label{fig:sub3}
  \includegraphics[width=0.40\linewidth]{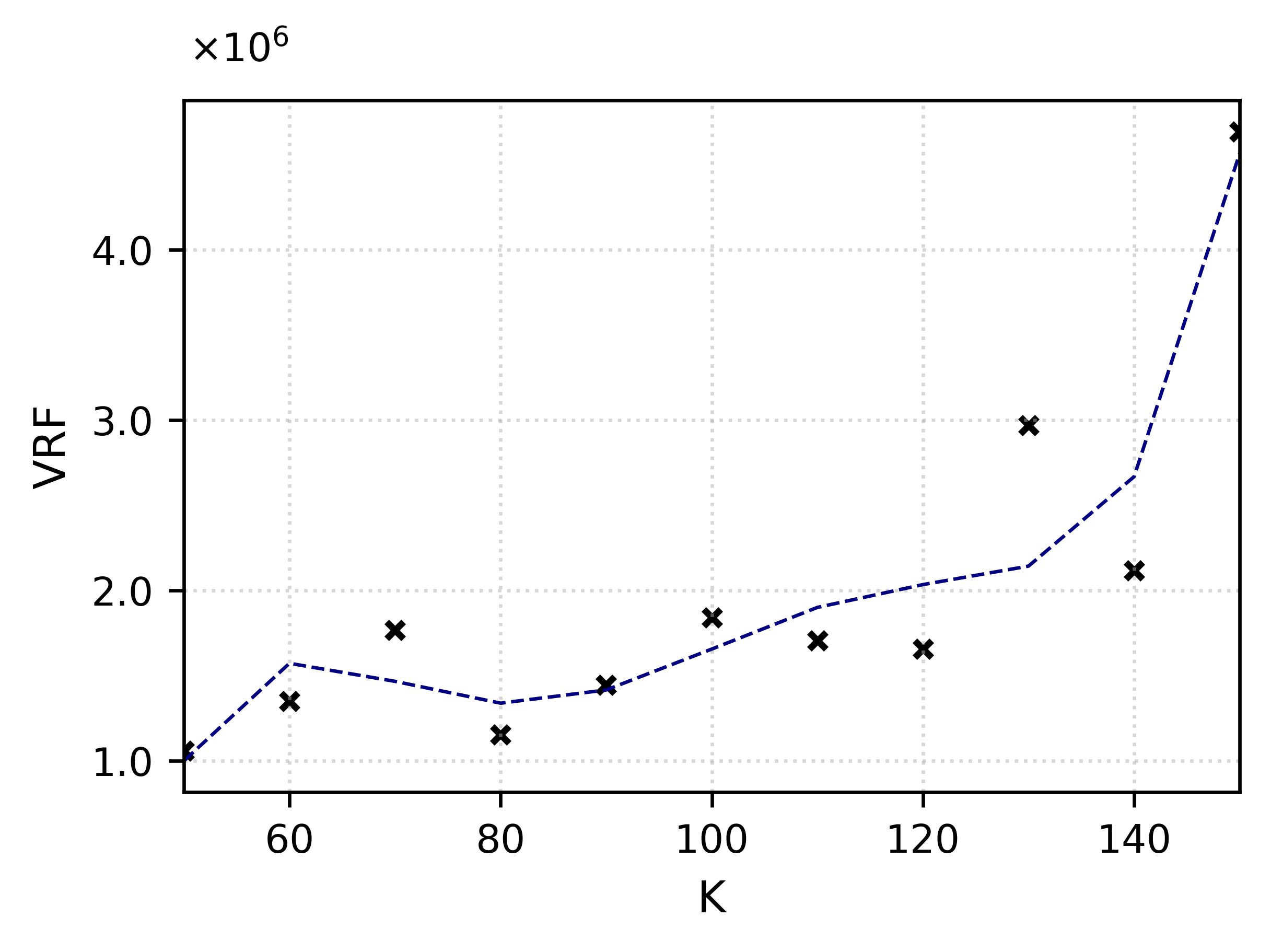}
}
\subfloat[$n=2^{13}$]{%
  \label{fig:sub4}
  \includegraphics[width=0.40\linewidth]{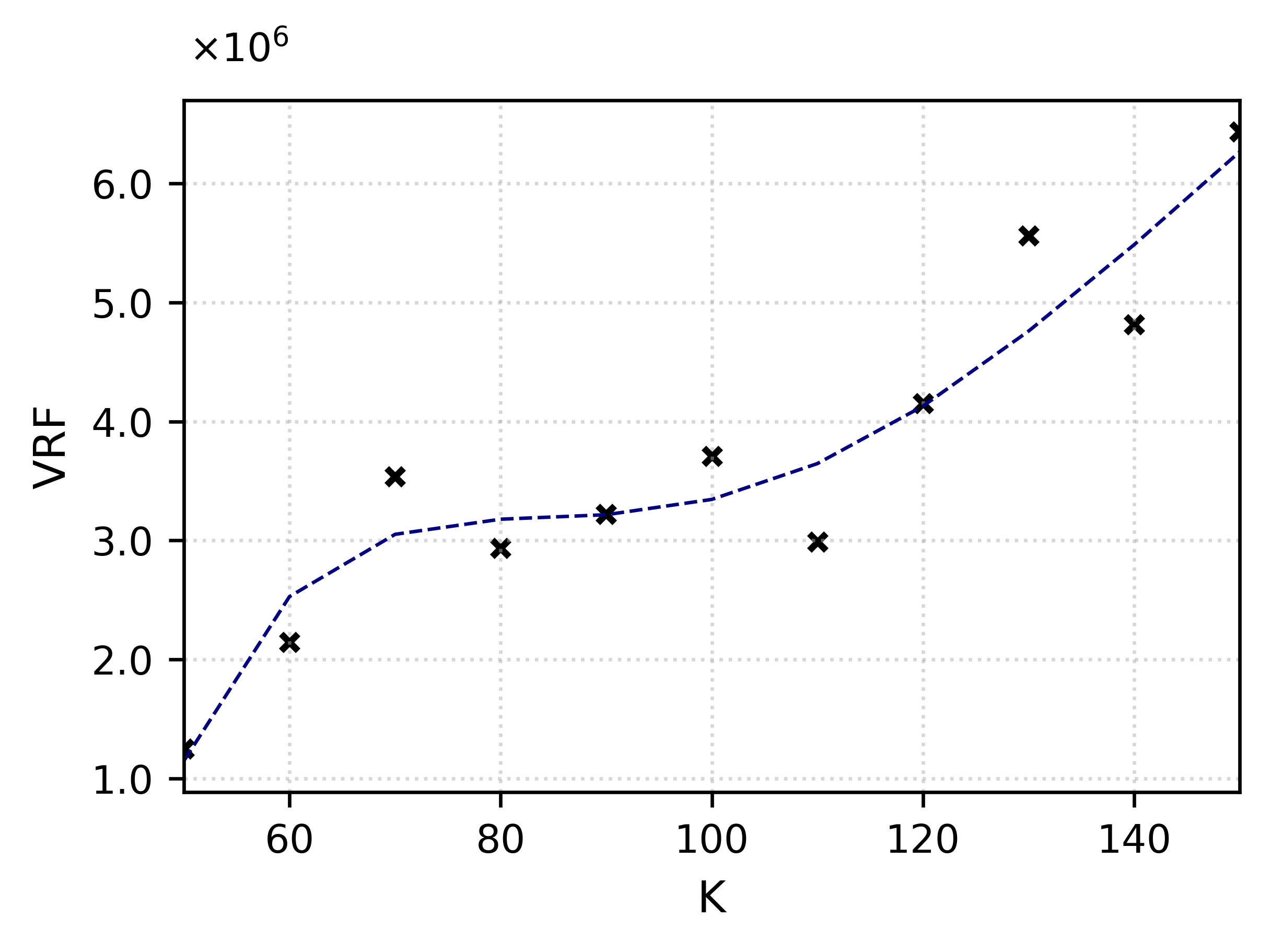}
}

\subfloat[$n=2^{14}$]{%
  \label{fig:sub5}
  \includegraphics[width=0.40\linewidth]{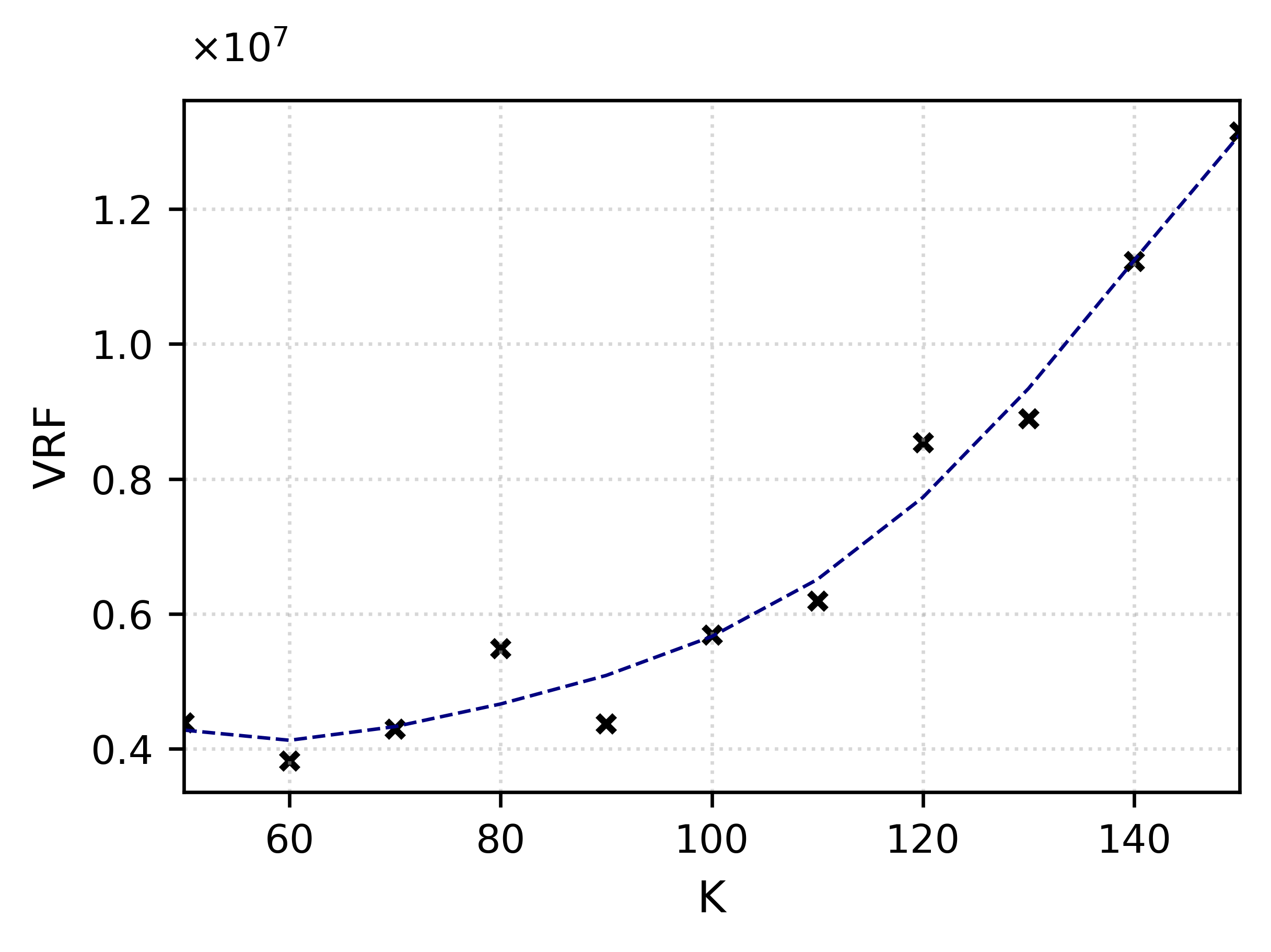}
}
\subfloat[$n=2^{15}$]{%
  \label{fig:sub6}
  \includegraphics[width=0.40\linewidth]{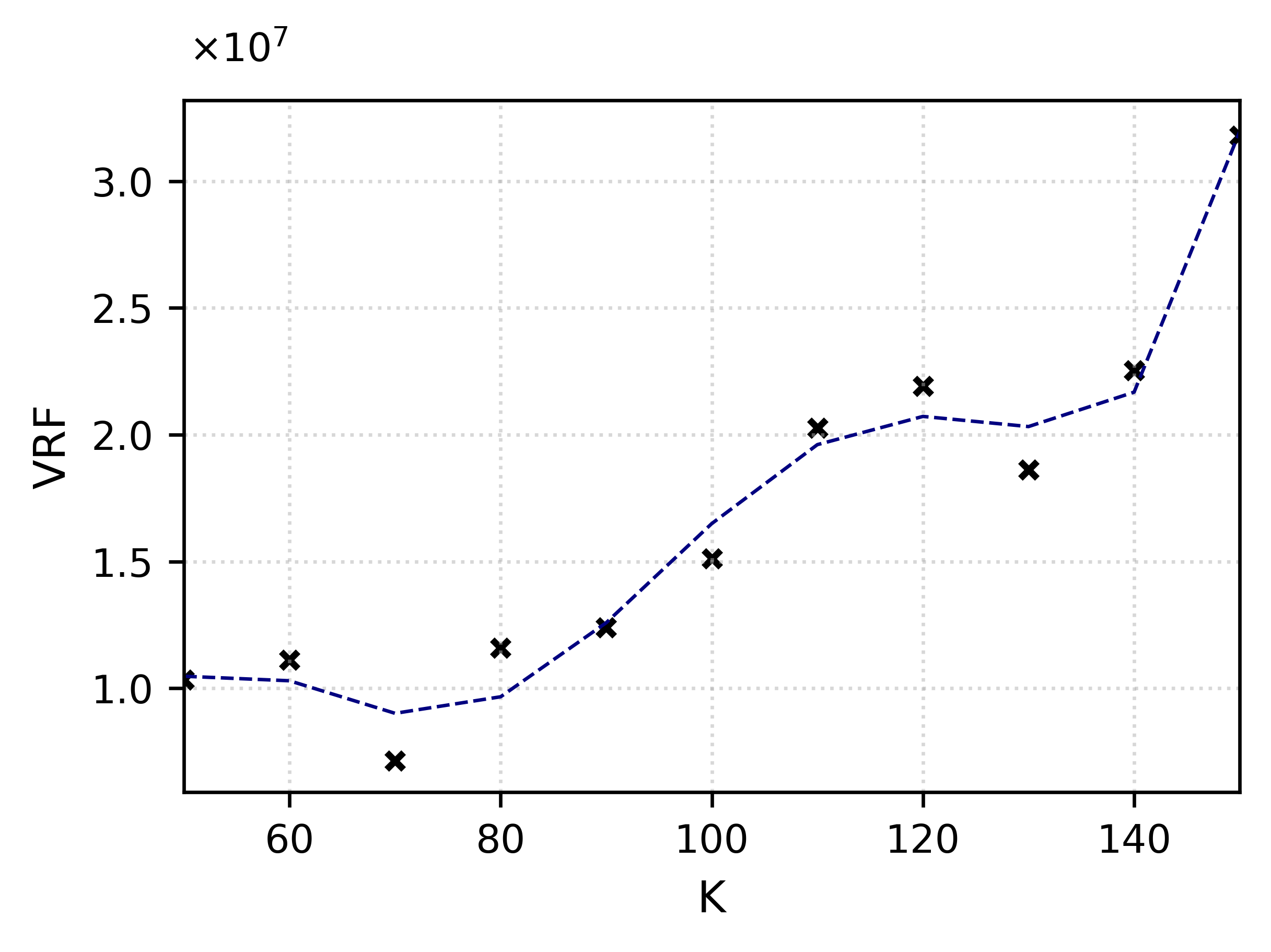}
}

\subfloat[$n=2^{16}$]{%
  \label{fig:sub7}
  \includegraphics[width=0.40\linewidth]{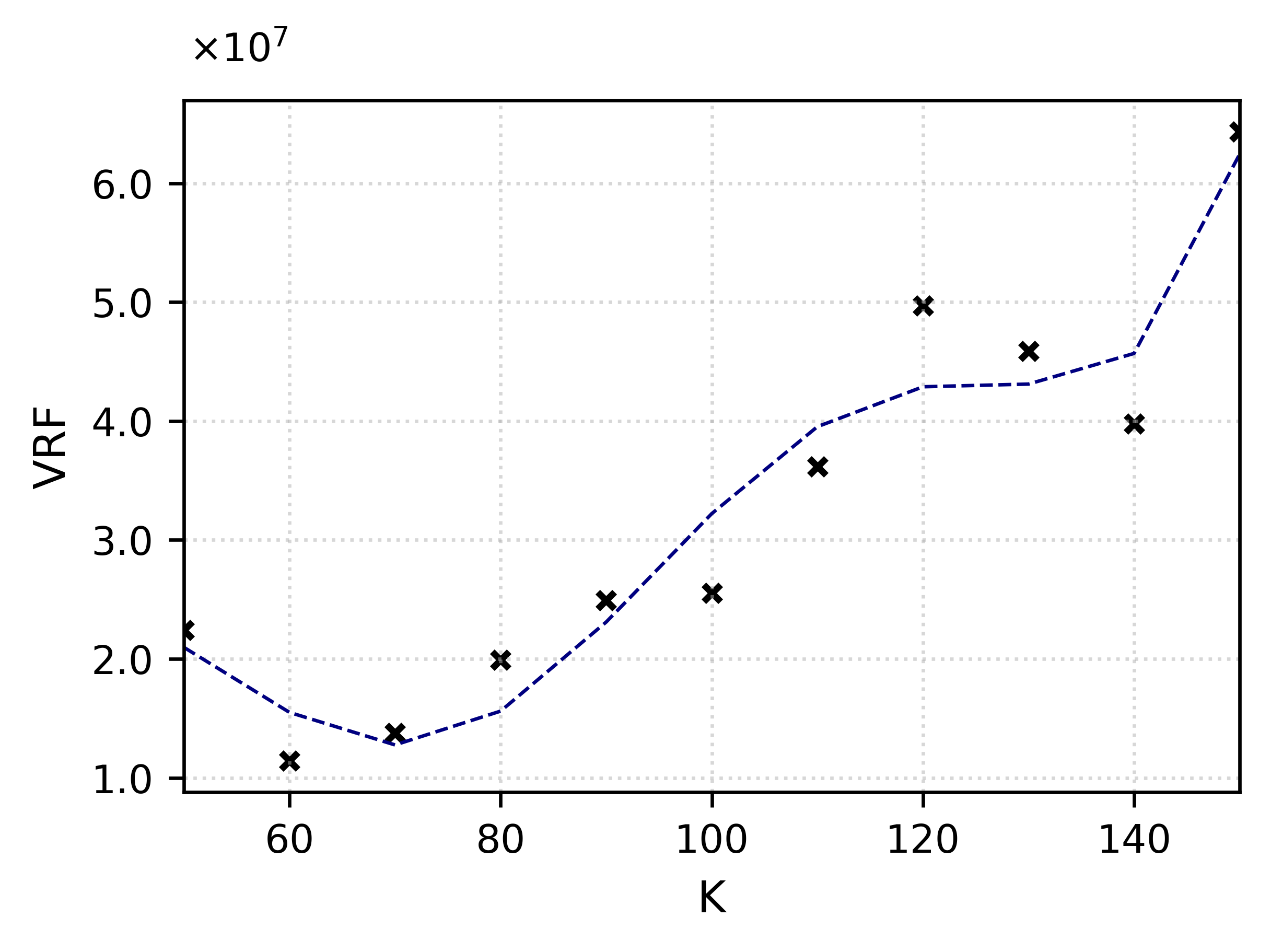}
}
\subfloat[$n=2^{17}$]{%
  \label{fig:sub8}
  \includegraphics[width=0.40\linewidth]{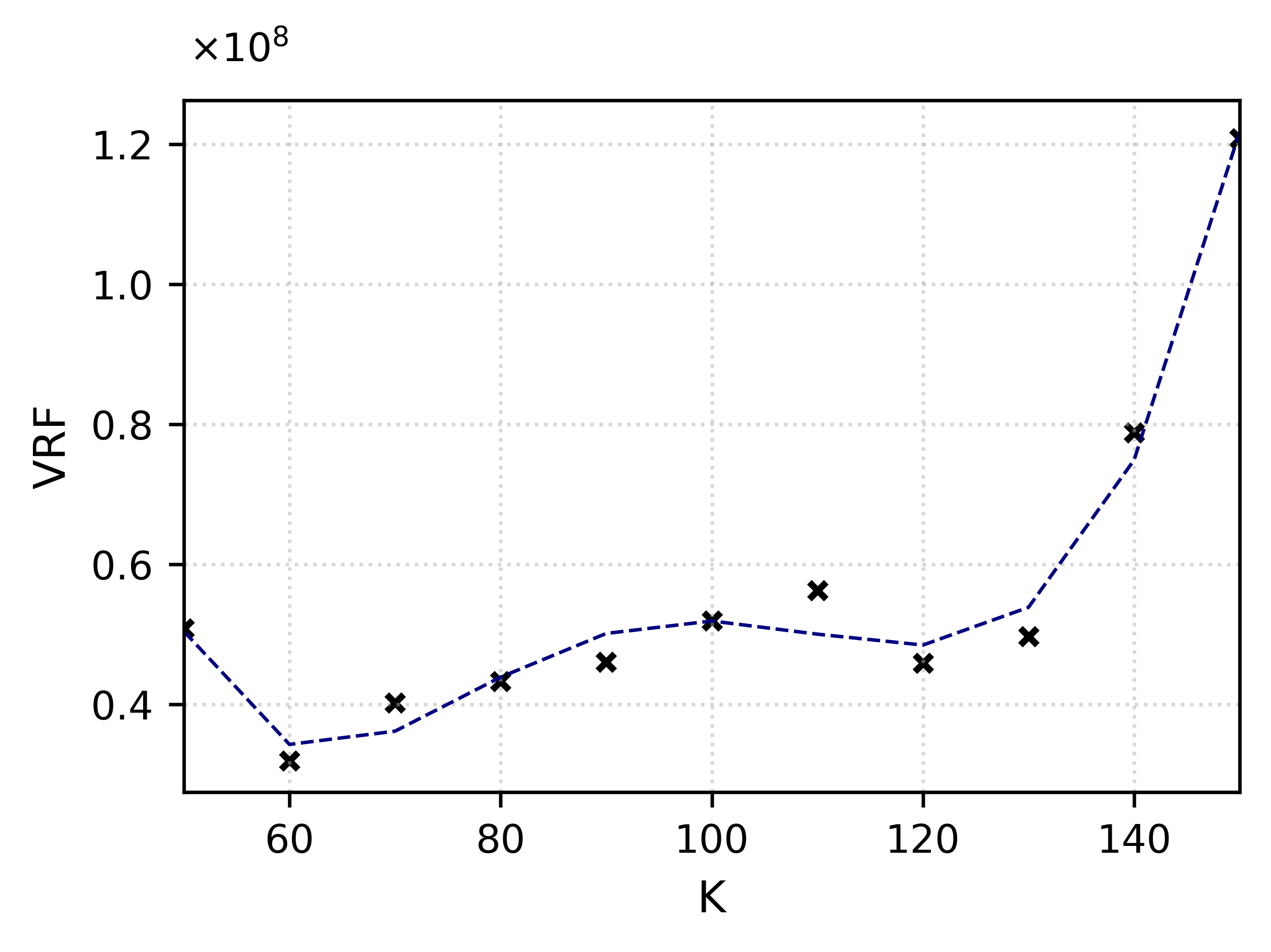}
}

\caption{Variance reduction factor of the IS\_AS\_Preint method as a function of the strike price~$K$.}
\label{fig:VRF-K}
\end{figure}

Figure~\ref{fig:VRF-K} shows that IS\_AS\_Preint exhibits the most pronounced advantage for out-of-the-money and deep out-of-the-money options, where the VRF increases significantly.
For at-the-money, in-the-money, and deep in-the-money options, the performance do not have dramatic deterioration.
This indicates that IS\_AS\_Preint method performs well across a wide range of option moneyness levels.

In addition, IS\_AS\_Preint may perform even better in deep in-the-money cases than in in-the-money or at-the-money cases, as shown in Figures~\ref{sub@fig:sub7} and~\ref{sub@fig:sub8}. This phenomenon may be attributed to the increasing effectiveness of IS in deep in-the-money cases.







\subsection{Sensitivity Analysis: Estimating Delta of Asian Option}

In this experiment, the IS\_AS\_Preint method is applied to estimate the Delta of Asian option.

The parameters are chosen as in \cite{zhangEfficientImportanceSampling2021}: $d=16$, $T=1$, $\sigma=0.3$, $r=0.05$, $S_0=50$, and $K=20,40,50,60,80$, corresponding respectively to deep in-the-money, in-the-money, at-the-money, out-of-the-money, and deep out-of-the-money options.
A randomized QMC method with $n=2^{12}$ samples per run is used and it is repeated \(m=100\) times in order to calculate the standard error.
The gradient information matrix is estimated by using $M=128$ samples.
The Brownian motion is constructed by the standard construction.

The VRF of IS\_AS\_Preint are computed and the results are reported in Table~\ref{tab:DeltaVRF}.
The CPW\_IS\_GPCA method is an efficient sensitivity estimation method proposed by Zhang et al.\, and its VRF are taken directly from \cite{zhangEfficientImportanceSampling2021}.

\begin{table}[tbhp]
\footnotesize
\caption{Comparison of VRFs for Asian option Delta estimation}
\label{tab:DeltaVRF}
\begin{center}
\begin{tabular}{|c|c|c|c|}
\hline
\bf Strike price $K$ & \bf RQMC & \bf CPW\_IS\_GPCA & \bf IS\_AS\_Preint \\ \hline
20 & $7.3\mathrm{e}{+03}$ & $3.1\mathrm{e}{+05}$ & $9.2\mathrm{e}{+05}$ \\
40 & $7.7\mathrm{e}{+00}$ & $5.6\mathrm{e}{+04}$ & $1.6\mathrm{e}{+06}$ \\
50 & $6.4\mathrm{e}{+00}$ & $9.4\mathrm{e}{+04}$ & $2.7\mathrm{e}{+06}$ \\
60 & $4.0\mathrm{e}{+00}$ & $5.0\mathrm{e}{+04}$ & $3.1\mathrm{e}{+06}$ \\
80 & $2.3\mathrm{e}{+00}$ & $1.7\mathrm{e}{+03}$ & $3.5\mathrm{e}{+07}$ \\ \hline
\end{tabular}
\end{center}
\end{table}

From Table~\ref{tab:DeltaVRF}, we observe the following phenomenons. 
In the deep in-the-money case, the VRFs of IS\_AS\_Preint and CPW\_IS\_GPCA are of comparable magnitude.
In the in-the-money, at-the-money, and out-of-the-money cases, the VRF of IS\_AS\_Preint is approximately two orders of magnitude larger than that of CPW\_IS\_GPCA.
In the deep out-of-the-money case, IS\_AS\_Preint achieves a VRF that is about four orders of magnitude larger.
These results demonstrate that IS\_AS\_Preint provides consistently better performance for the Delta estimation of Asian options across different strike prices.

\section{Conclusion}
\label{sec:conclusion}

This work proposed a three-step method that combines IS, the AS method and preintegration, referred to as the IS\_AS\_Preint method. The method is designed to leverage the complementary strengths of these techniques under the framework of QMC method.

Theoretical analysis established the orthogonal invariance of both the active subspace and IS-active subspace, providing a solid foundation for the proposed framework. A key requirement of the method is the preservation of monotonicity with respect to the first variable, which ensures that the final preintegration step admits an analytic expression.

Numerical experiments on Asian option pricing and sensitivity analysis problems demonstrated that the IS\_AS\_Preint method is particularly effective for out-of-the-money and deep out-of-the-money options, where several existing methods fail. For at-the-money and in-the-money cases, the proposed method achieves performance comparable to the most popular approaches. Overall, the results indicate that IS\_AS\_Preint provides a robust and broadly applicable tool for high-dimensional option pricing  and sensitivity analysis problems across different strike cases.



\section*{Acknowledgments}
This work was completed during my graduate studies at Tsinghua University, supervised by Prof. Xiaoqun Wang. 

\bibliographystyle{siamplain} 
\bibliography{references}
\end{document}

%% file: ex_shared.tex
\usepackage{lipsum}
\usepackage{amsfonts}
\usepackage{graphicx}
\usepackage{epstopdf}
\usepackage{algorithmic}
\usepackage{bm}
\ifpdf
  \DeclareGraphicsExtensions{.eps,.pdf,.png,.jpg}
\else
  \DeclareGraphicsExtensions{.eps}
\fi
\usepackage{graphicx,epstopdf} 
\usepackage[caption=false]{subfig} 

\newsiamremark{remark}{Remark}
\newsiamremark{hypothesis}{Hypothesis}
\crefname{hypothesis}{Hypothesis}{Hypotheses}
\newsiamthm{claim}{Claim}
\newsiamremark{fact}{Fact}
\crefname{fact}{Fact}{Facts}

\headers{importance sampling and active subspace in quasi-Monte Carlo}{Jiaxin Yu, Xiaoqun Wang}

\title{importance sampling and active subspace in quasi-Monte Carlo\thanks{Submitted to the editors on March 2, 2026.
\funding{}}}

\author{Jiaxin Yu\thanks{
  (\email{jasenyu@foxmail.com}).} 
\and Xiaoqun Wang\thanks{Department of Mathematical Science, Tsinghua University, Beijing, China
  (\email{wangxiaoqun@mail.tsinghua.edu.cn}).}
}

\usepackage{amsopn}


%% file: references.bib
@book{constantineActiveSubspacesEmerging2015,
  title      = {Active {{Subspaces}}: {{Emerging Ideas}} for {{Dimension Reduction}} in {{Parameter Studies}}},
  author     = {Constantine, Paul G.},
  year       = {2015},
  publisher  = {{Society for Industrial and Applied Mathematics}},
  address    = {Philadelphia, PA},
  isbn       = {978-1-61197-385-3 978-1-61197-386-0},
  langid     = {english},
  month      = mar,
  shorttitle = {Active {{Subspaces}}},
  urldate    = {2024-03-14}
}

@article{heDimensionReductionSmoothing2017,
  title    = {Dimension {{reduction}} and {{smoothing}} in {{quasi-Monte Carlo method}} for {{financial engineering}}},
  author   = {He, Zhijian and Wang, Xiaoqun},
  year     = {2017},
  month    = sep
}

@book{glassermanMonteCarloMethods2003,
  title     = {{{Monte Carlo Methods}} in {{Financial Engineering}}},
  author    = {Glasserman, Paul},
  year      = {2003},
  publisher = {Springer},
  address   = {New York, NY},
  editor    = {Rozovskii, B. and Yor, M.},
  isbn      = {978-1-4419-1822-2 978-0-387-21617-1},
  urldate   = {2023-12-21}
}

@book{niederreiterRandomNumberGeneration1992,
  title     = {Random {{Number Generation}} and {{Quasi-Monte Carlo Methods}}},
  author    = {Niederreiter, Harald},
  year      = {1992},
  publisher = {{Society for Industrial and Applied Mathematics}},
  isbn      = {978-0-89871-295-7 978-1-61197-008-1},
  langid    = {english},
  month     = jan,
  urldate   = {2023-12-21}
}

@article{huDiscoveringOnedimensionalActive2016,
  title    = {Discovering a One-Dimensional Active Subspace to Quantify Multidisciplinary Uncertainty in Satellite System Design},
  author   = {Hu, Xingzhi and Parks, Geoffrey T. and Chen, Xiaoqian and Seshadri, Pranay},
  journal  = {Advances in Space Research},
  year     = {2016},
  volume   = {57},
  number   = {5},
  pages    = {1268--1279},
  issn     = {0273-1177},
  month    = mar,
  urldate  = {2024-04-26}
}

@article{xiaoEnhancingQuasiMonteCarlo2019,
  title   = {Enhancing {{quasi-Monte Carlo simulation}} by {{minimizing effective dimension}} for {{derivative pricing}}},
  author  = {Xiao, Ye and Wang, Xiaoqun},
  journal = {Computational Economics},
  year    = {2019},
  volume  = {54},
  number  = {1},
  pages   = {343--366},
  issn    = {0927-7099, 1572-9974},
  langid  = {english},
  month   = jun,
  urldate = {2023-12-23}
}

@article{jeffersonActiveSubspacesSensitivity2015,
  title    = {Active Subspaces for Sensitivity Analysis and Dimension Reduction of an Integrated Hydrologic Model},
  author   = {Jefferson, Jennifer and Gilbert, James and Constantine, Paul and Maxwell, Reed},
  journal  = {Computers and Geosciences},
  year     = {2015},
  volume   = {83},
  pages    = {127--138},
  month    = oct
}

@article{wangHowPathGeneration2012,
  title    = {How Do Path Generation Methods Affect the Accuracy of Quasi-{{Monte Carlo}} Methods for Problems in Finance?},
  author   = {Wang, Xiaoqun and Tan, Ken Seng},
  journal  = {Journal of Complexity},
  year     = {2012},
  volume   = {28},
  number   = {2},
  pages    = {250--277},
  issn     = {0885-064X},
  month    = apr,
  urldate  = {2023-12-21}
}

@article{papageorgiouBrownianBridgeDoes2002,
  title    = {The {{Brownian bridge does not offer}} a {{consistent advantage}} in {{quasi-Monte Carlo integration}}},
  author   = {Papageorgiou, A.},
  journal  = {Journal of Complexity},
  year     = {2002},
  volume   = {18},
  number   = {1},
  pages    = {171--186},
  urldate  = {2023-12-25}
}

@article{wangEffectiveDimensionQuasiMonte2003,
  title    = {The Effective Dimension and Quasi-{{Monte Carlo}} Integration},
  author   = {Wang, Xiaoqun and Fang, Kai-Tai},
  journal  = {Journal of Complexity},
  year     = {2003},
  volume   = {19},
  number   = {2},
  pages    = {101--124},
  issn     = {0885-064X},
  month    = apr,
  urldate  = {2023-12-25}
}

@article{xiaoConditionalQuasiMonteCarlo2018,
  title    = {Conditional Quasi-{{Monte Carlo}} Methods and Dimension Reduction for Option Pricing and Hedging with Discontinuous Functions},
  author   = {Xiao, Ye and Wang, Xiaoqun},
  journal  = {Journal of Computational and Applied Mathematics},
  year     = {2018},
  volume   = {343},
  pages    = {289--308},
  issn     = {0377-0427},
  month    = dec,
  urldate  = {2023-12-17}
}

@article{imaiGeneralDimensionReduction2006,
  title     = {A General Dimension Reduction Technique for Derivative Pricing},
  author    = {Imai, Junichi and Tan, Ken Seng},
  journal   = {Journal of Computational Finance},
  year      = {2006},
  volume    = {10},
  number    = {2},
  pages     = {129},
  publisher = {RISK PUBLICATIONS},
  urldate   = {2023-12-23}
}

@article{suOptimalImportanceSampling2002,
  title    = {Optimal Importance Sampling in Securities Pricing},
  author   = {Su, Yi and Fu, Michael},
  journal  = {{{Journal}} of {{Computational Finance}}},
  year     = {2002},
  pages    = {27--50},
  issn     = {14601559},
  urldate  = {2024-04-22}
}

@article{boyleMonteCarloMethods1997,
  title    = {Monte {{Carlo}} Methods for Security Pricing},
  author   = {Boyle, Phelim and Broadie, Mark and Glasserman, Paul},
  journal  = {Journal of Economic Dynamics and Control},
  year     = {1997},
  volume   = {21},
  number   = {8},
  pages    = {1267--1321},
  issn     = {0165-1889},
  month    = jun,
  series   = {Computational Financial Modelling},
  urldate  = {2023-12-25}
}

@article{kukLaplaceImportanceSampling1999,
  title   = {Laplace {{importance sampling}} for {{generalized linear mixed models}}},
  author  = {Kuk, Anthony Y. C.},
  journal = {Journal of Statistical Computation and Simulation},
  year    = {1999},
  volume  = {63},
  number  = {2},
  pages   = {143--158},
  issn    = {0094-9655, 1563-5163},
  langid  = {english},
  month   = apr,
  urldate = {2024-04-06}
}

@article{liuEstimatingMeanDimensionality2006,
  title    = {Estimating {{mean dimensionality}} of {{analysis}} of {{variance decompositions}}},
  author   = {Liu, Ruixue and Owen, Art B},
  journal  = {Journal of the American Statistical Association},
  year     = {2006},
  volume   = {101},
  number   = {474},
  pages    = {712--721},
  issn     = {0162-1459, 1537-274X},
  langid   = {english},
  month    = jun,
  urldate  = {2023-12-25}
}

@article{boothMaximizingGeneralizedLinear1999,
  title     = {Maximizing {{generalized linear mixed model likelihoods with}} an {{automated Monte Carlo EM algorithm}}},
  author    = {Booth, James G. and Hobert, James P.},
  journal   = {Journal of the Royal Statistical Society Series B: Statistical Methodology},
  year      = {1999},
  volume    = {61},
  number    = {1},
  pages     = {265--285},
  copyright = {https://academic.oup.com/journals/pages/open\_access/funder\_policies/chorus/standard\_publication\_model},
  issn      = {1369-7412, 1467-9868},
  langid    = {english},
  month     = jan,
  urldate   = {2024-04-06}
}

@article{BroadieEstimatingsecurityprice1996,
  title     = {Estimating Security Price Derivatives Using Simulation},
  author    = {Mark Broadie and Paul Glasserman},
  journal   = {Management Science},
  year      = {1996},
  volume    = {42},
  number    = {2},
  pages     = {269--285},
  publisher = {INFORMS},
  issn      = {00251909, 15265501}
}

@article{akessonPathGenerationQuasiMonte2000,
  title     = {Path {{generation}} for {{quasi-Monte Carlo simulation}} of {{mortgage-backed securities}}},
  author    = {{\AA}kesson, Fredrik and Lehoczky, John P.},
  journal   = {Management Science},
  year      = {2000},
  volume    = {46},
  number    = {9},
  pages     = {1171--1187},
  publisher = {INFORMS},
  issn      = {0025-1909},
  month     = sep,
  urldate   = {2023-12-23}
}

@article{glassermanAsymptoticallyOptimalImportance1999,
  title    = {Asymptotically {{optimal importance sampling}} and {{stratification}} for {{pricing path-dependent options}}},
  author   = {Glasserman, Paul and Heidelberger, Philip and Shahabuddin, Perwez},
  journal  = {Mathematical Finance},
  year     = {1999},
  volume   = {9},
  number   = {2},
  pages    = {117--152},
  issn     = {1467-9965},
  langid   = {english},
  urldate  = {2024-04-03}
}

@article{sobolSensitivityEstimatesNonlinear1993,
  title    = {Sensitivity {{estimates}} for {{nonlinear mathematical models}}},
  author   = {Sobol', I M.},
  journal  = {Mathematical Modelling and Computational Experiments},
  year     = {1993},
  pages    = {1:407-414},
  urldate  = {2024-04-11}
}

@article{kuoQuasiMonteCarloHighly2008,
  title    = {Quasi-{{Monte Carlo}} for {{highly structured generalised response models}}},
  author   = {Kuo, F. Y. and Dunsmuir, W. T. M. and Sloan, I. H. and Wand, M. P. and Womersley, R. S.},
  journal  = {Methodology and Computing in Applied Probability},
  year     = {2008},
  volume   = {10},
  number   = {2},
  pages    = {239--275},
  issn     = {1573-7713},
  langid   = {english},
  month    = jun,
  urldate  = {2024-04-20}
}

@article{wangHandlingDiscontinuitiesFinancial2016,
  title      = {Handling {{discontinuities}} in {{financial engineering}}: {{good path simulation}} and {{smoothing}}},
  author     = {Wang, Xiaoqun},
  journal    = {Operations Research},
  year       = {2016},
  volume     = {64},
  number     = {2},
  pages      = {297--314},
  publisher  = {INFORMS},
  issn       = {0030-364X},
  month      = apr,
  shorttitle = {Handling {{Discontinuities}} in {{Financial Engineering}}},
  urldate    = {2024-04-13}
}

@article{wangEffectsDimensionReduction2006,
  title    = {On the {{effects}} of {{dimension reduction techniques}} on {{some high-dimensional problems}} in {{finance}}},
  author   = {Wang, Xiaoqun},
  journal  = {Operations Research},
  year     = {2006},
  volume   = {54},
  number   = {6},
  pages    = {1063--1078},
  issn     = {0030-364X, 1526-5463},
  langid   = {english},
  month    = dec,
  urldate  = {2023-12-25}
}

@article{wangQuasiMonteCarloMethods2011,
  title      = {Quasi-{{Monte Carlo methods}} in {{financial engineering}}: {{an equivalence principle}} and {{dimension reduction}}},
  author     = {Wang, Xiaoqun and Sloan, Ian H.},
  journal    = {Operations Research},
  year       = {2011},
  volume     = {59},
  number     = {1},
  pages      = {80--95},
  issn       = {0030-364X, 1526-5463},
  langid     = {english},
  month      = feb,
  shorttitle = {Quasi-{{Monte Carlo Methods}} in {{Financial Engineering}}},
  urldate    = {2023-12-23}
}

@article{sobolDerivativeBasedGlobal2010,
  title    = {Derivative Based Global Sensitivity Measures},
  author   = {Sobol', I M. and Kucherenko, S.},
  journal  = {Procedia-Social and Behavioral Sciences},
  year     = {2010},
  volume   = {2},
  number   = {6},
  pages    = {7745--7746},
  issn     = {1877-0428},
  month    = jan,
  series   = {Sixth {{International Conference}} on {{Sensitivity Analysis}} of {{Model Output}}},
  urldate  = {2024-04-13}
}

@article{capriottiLeastsquaresImportanceSampling2008,
  title     = {Least-squares {{importance sampling}} for {{Monte Carlo}} security pricing},
  author    = {Capriotti, Luca},
  journal   = {Quantitative Finance},
  year      = {2008},
  volume    = {8},
  number    = {5},
  pages     = {485--497},
  publisher = {Routledge},
  issn      = {1469-7688},
  month     = aug,
  urldate   = {2024-04-13}
}

@article{heErrorRateConditional2019,
  title   = {On the {{error rate}} of {{conditional quasi--Monte Carlo}} for {{discontinuous functions}}},
  author  = {He, Zhijian},
  journal = {SIAM Journal on Numerical Analysis},
  year    = {2019},
  volume  = {57},
  number  = {2},
  pages   = {854--874},
  issn    = {0036-1429, 1095-7170},
  langid  = {english},
  month   = jan,
  urldate = {2024-02-25}
}

@article{liuPreintegrationActiveSubspace2023,
  title    = {Preintegration via {{active subspace}}},
  author   = {Liu, Sifan and Owen, Art B.},
  journal  = {SIAM Journal on Numerical Analysis},
  year     = {2023},
  volume   = {61},
  number   = {2},
  pages    = {495--514},
  issn     = {0036-1429, 1095-7170},
  langid   = {english},
  month    = apr,
  urldate  = {2023-12-17}
}

@article{constantineActiveSubspaceMethods2014,
  title      = {Active {{subspace methods}} in {{theory}} and {{practice}}: {{applications}} to {{Kriging surfaces}}},
  author     = {Constantine, Paul G. and Dow, Eric and Wang, Qiqi},
  journal    = {SIAM Journal on Scientific Computing},
  year       = {2014},
  volume     = {36},
  number     = {4},
  pages      = {A1500-A1524},
  issn       = {1064-8275, 1095-7197},
  langid     = {english},
  month      = jan,
  shorttitle = {Active {{Subspace Methods}} in {{Theory}} and {{Practice}}},
  urldate    = {2023-12-17}
}

@article{wengEfficientComputationOption2017,
  title    = {Efficient {{computation}} of {{option prices}} and {{Greeks}} by {{quasi--Monte Carlo Method}} with {{smoothing}} and {{dimension reduction}}},
  author   = {Weng, Chengfeng and Wang, Xiaoqun and He, Zhijian},
  journal  = {SIAM Journal on Scientific Computing},
  year     = {2017},
  volume   = {39},
  number   = {2},
  pages    = {B298-B322},
  issn     = {1064-8275, 1095-7197},
  langid   = {english},
  month    = jan,
  urldate  = {2023-12-23}
}

@article{zhangEfficientImportanceSampling2021,
  title   = {Efficient {{importance sampling}} in {{quasi-Monte Carlo methods}} for {{computational finance}}},
  author  = {Zhang, Chaojun and Wang, Xiaoqun and He, Zhijian},
  journal = {SIAM Journal on Scientific Computing},
  year    = {2021},
  volume  = {43},
  number  = {1},
  pages   = {B1-B29},
  issn    = {1064-8275, 1095-7197},
  langid  = {english},
  month   = jan,
  urldate = {2023-12-17}
}

@article{caflischValuationMortgagebackedSecurities1997,
  title    = {Valuation of mortgage-backed securities using {{Brownian}} bridges to reduce effective Dimension},
  author   = {Caflisch, Russel and Morokoff, William and Owen, Art},
  journal  = {The Journal of Computational Finance},
  year     = {1997},
  volume   = {1},
  number   = {1},
  pages    = {27--46},
  issn     = {14601559},
  urldate  = {2023-12-23}
}

@article{paskovFasterValuationFinancial1995,
  title    = {Faster {{valuation}} of {{financial derivatives}}},
  author   = {Paskov, Spassimir H. and Traub, Joseph F.},
  journal  = {The Journal of Portfolio Management},
  year     = {1995},
  volume   = {22},
  number   = {1},
  pages    = {113--123},
  issn     = {0095-4918, 2168-8656},
  langid   = {english},
  month    = oct,
  urldate  = {2023-12-23}
}

@article{Reiderefficientmontecarlo1993,
  title   = {An Efficient {{Monte Carlo}} Technique for Pricing Options},
  author  = {R. Reider},
  journal = {Working Paper, Wharton School, University of Pennsylvania},
  year    = {1993}
}

@inproceedings{acworthComparisonMonteCarlo1998a,
  title     = {A {{comparison}} of {{some Monte Carlo}} and {{quasi Monte Carlo techniques}} for {{option pricing}}},
  author    = {Acworth, Peter A. and Broadie, Mark and Glasserman, Paul},
  booktitle = {Monte {{Carlo}} and {{Quasi-Monte Carlo Methods}} 1996},
  year      = {1998},
  pages     = {1--18},
  publisher = {Springer},
  address   = {New York, NY},
  editor    = {Niederreiter, Harald and Hellekalek, Peter and Larcher, Gerhard and Zinterhof, Peter},
  isbn      = {978-1-4612-1690-2},
  langid    = {english}
}

@inproceedings{Glynnlikelihoodratiogradient1987,
  title     = {Likelilood ratio gradient estimation: an overview},
  author    = {Glynn, Peter W.},
  booktitle = {Proceedings of the 19th Conference on Winter Simulation},
  year      = {1987},
  pages     = {366–375},
  publisher = {Association for Computing Machinery},
  address   = {New York, NY, USA},
  isbn      = {0911801324},
  location  = {Atlanta, Georgia, USA},
  numpages  = {10},
  series    = {WSC '87}
}
